\theoremstyle{plain}
\newtheorem{theorem}{Theorem}
\newtheorem{lemma}[theorem]{Lemma}
\newtheorem{proposition}[theorem]{Proposition}
\newtheorem{corollary}[theorem]{Corollary}
\theoremstyle{definition}
\newtheorem*{definition*}{Definition}
\theoremstyle{remark}
\newtheorem{remark}[theorem]{Remark}
\newtheorem{question}[theorem]{Question}
\newtheorem{example}[theorem]{Example}
\numberwithin{equation}{section}
\def\bkC{{\rm \kern.24em \vrule width.05em height1.4ex depth-.05ex \kern-.26em C}}
\def\bksC{{\rm \kern.24em \vrule width.05em height1ex depth-.05ex \kern-.26em C}}
\def\bkE{{\rm I\kern-.22em E}}
\def\bkN{{\rm I\kern-.22em N}}
\def\bkP{{\rm I\kern-.22em P}}
\def\bkH{{\rm I\kern-.22em H}}
\def\bkQ{{\rm \kern.24em \vrule width.05em height1.4ex depth-.05ex \kern-.26em Q}}
\def\Q{\bkQ}
\def\bkR{{\rm I\kern-.17em R}}
\def\R{\bkR}
\def\bkZ{{\rm Z\kern-.32em Z}}
\def\Z{\bkZ}
\def\bksZ{{\rm Z\kern-.22em Z}}
\def\Nzero{{\mathbb N}}
\def\smooth{}
\DeclareMathOperator{\Int}{int}
\DeclareMathOperator{\dd}{d}
\DeclareMathOperator{\dx}{dx}
\DeclareMathOperator{\im}{im}
\DeclareMathOperator{\rank}{rank}
\DeclareMathOperator{\hrank}{h-rank}
\DeclareMathOperator{\mrank}{m-rank}
\DeclareMathOperator{\grank}{g-rank}
\DeclareMathOperator{\Hom}{Hom}
\DeclareMathOperator{\cls}{cl}
\begin{document}

\title{Complexity functions on 1--dimensional cohomology}
\author{Daryl Cooper and Stephan Tillmann}

\begin{abstract} 
For a smooth, closed $n$--manifold $M$, we define an upper semi-continuous integer-valued  
complexity function on $H^1(M;{\mathbb R})$ using Morse theory. 
This measures how far an integral class is from being a fiber of a fibration. The fact complexity
 minimisers are open generalises Tischler's result on the openness of classes dual to fibrations.
We then use this to define a complexity function on 1--dimensional cohomology of a finitely presented group, which is constant on open rays from the origin and vanishes precisely on the geometric invariant due to Bieri, Neumann and Strebel.
\end{abstract}

\primaryclass{57M25, 57N10, 20E06}
\keywords{manifold, 1--form, finitely generated group, Bieri-Neumann-Strebel invariant, Thurston norm}
\makeshorttitle


\section{Introduction}

This work has its origin in a desire to exhibit extra structure in the non-fibred faces of the unit norm ball of the Thurston norm on first homology of a 3--manifold (see~\cite{Thu1986}). However, our constructions are more general. To keep the discussion brief, we restrict ourselves to closed manifolds and finitely presented groups in this note. Throughout this paper we regard $0$ as a natural number in ${\mathbb N}$ and manifolds are smooth.

Suppose $M$ is a connected \smooth  oriented closed $n$-manifold. The {\em Morse complexity function}  
$$\widetilde{m}:H^1(M;{\mathbb R})\longrightarrow \Nzero^{n+1}$$ 
is a vector, where the $k^{th}$ component counts the number of critical points of Morse index $k$ of a minimal closed Morse 1--form representing the cohomology class, with the minimum taken over all representatives using lexicographic order. 
Since $\widetilde{m}$ is constant on rays, it gives a function on the sphere
$$
m:S(M) = (H^1(M; \R) \setminus \{ 0\})\; /\; \R_+\longrightarrow\Nzero^{n+1}.
$$

Then $m(\psi)=\vec 0$ if and only if $\psi$ is represented by a 
non-singular closed 1--form. It follows from work of Tischler~\cite{Tisch} (see Fried~\cite[Theorem 1]{Fried}) 
that the set of cohomology classes of non-singular closed 1--forms is an open cone in $H^1(M; \R) \setminus \{ 0\},$ 
which is non-empty if and only if there is an integral class represented by a codimension--1 manifold that is a fiber of a fibration of $M$ over $S^1.$ 
 
\begin{theorem}\label{thm:intro main} Suppose $M$ is a closed \smooth orientable $n$-manifold.
For each $\vec m\in\Nzero^{n+1}$  there is an open subset
$U\subset S(M)$ such that $m(\psi)\le \vec m$ if and only if $\psi\in U$.
\end{theorem}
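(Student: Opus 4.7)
The plan is to prove upper semi-continuity of $m$ directly. Suppose $\psi_0 \in S(M)$ satisfies $m(\psi_0)=\vec m_0 \le \vec m$ lexicographically. By definition of $\widetilde m$ as a lex-minimum, there exists a closed Morse 1--form $\omega_0$ representing some positive multiple of $\psi_0$ whose vector of critical-point counts by index equals $\vec m_0$. The aim is to produce an open neighbourhood $V$ of $[\omega_0]$ in $H^1(M;\R)\setminus\{0\}$ such that every class in $V$ has a closed Morse 1--form representative with critical-point vector exactly $\vec m_0$. Since the quotient map $H^1(M;\R)\setminus\{0\}\to S(M)$ is open, the image of $V$ is then the required open neighbourhood $U \ni \psi_0$, and every $\psi\in U$ satisfies $m(\psi)\le\vec m_0\le\vec m$.

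To construct $V$, pick closed 1--forms $\alpha_1,\ldots,\alpha_b$ whose classes form a basis of $H^1(M;\R)$, and consider the smooth family
$$
\omega_t \;=\; \omega_0 + \sum_{i=1}^{b} t_i\,\alpha_i, \qquad t\in\R^b.
$$
Each $\omega_t$ is closed, the affine map $t\mapsto[\omega_t]$ is an isomorphism $\R^b\to H^1(M;\R)$, and $\omega_t\to\omega_0$ in $C^\infty$ as $t\to 0$. It therefore suffices to show that for all $t$ in some neighbourhood of $0$, $\omega_t$ is Morse with the same number of critical points of each Morse index as $\omega_0$.

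This reduces to a local stability statement for non-degenerate zeros. Let $p_1,\ldots,p_N$ be the critical points of $\omega_0$ and choose contractible coordinate balls $B_j\ni p_j$. On each $B_j$ every closed 1--form is exact, so $\omega_t|_{B_j} = d h_{j,t}$ for a smooth family $h_{j,t}$, normalised by $h_{j,t}(p_j)=0$. Since $p_j$ is a non-degenerate critical point of $h_{j,0}$, the implicit function theorem applied to $\nabla h_{j,t}=0$ produces, for all sufficiently small $t$, a unique critical point $p_j(t)\in B_j$ of $h_{j,t}$, again non-degenerate and of the same Morse index as $p_j$. Meanwhile, on the compact complement $K$ of slightly smaller balls around the $p_j$, $\omega_0$ is bounded away from zero, so for all sufficiently small $t$ the form $\omega_t$ is also non-vanishing on $K$. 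Consequently the critical-point vector of $\omega_t$ equals $\vec m_0$ throughout a neighbourhood of $0$.

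The main obstacle is really just bookkeeping: arranging that the implicit-function-theorem estimates at the finitely many $p_j$ and the lower bound for $|\omega_0|$ on $K$ hold simultaneously on a common neighbourhood of $0$. Because $N$ is finite and $K$ is compact, this is straightforward, and I anticipate no deeper difficulty beyond verifying that the local exactness construction of $h_{j,t}$ can be carried out smoothly in $t$.
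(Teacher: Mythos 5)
Your proof is correct, and the overall architecture matches the paper's: take a lex-minimising Morse representative $\omega_0$, parametrise nearby cohomology classes by a continuous linear section of the space of closed forms (your basis $\alpha_1,\dots,\alpha_b$ plays the role of the paper's Lemma~\ref{nearbyclass}), and show that every nearby class admits a Morse representative with the same complexity vector, then push forward through the open quotient map to $S(M)$. Where you genuinely diverge is in the local stability step. The paper's Lemma~\ref{lem:local stability} does not let the zeros move at all: it writes the small perturbation $\phi$ locally as $\dd g$ near each zero, damps it out with a bump function $\lambda_i$, and estimates norms on the annular regions to show the surgered form is still Morse and literally \emph{agrees with} $\omega_0$ on a neighbourhood of its zero set. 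You instead take the unmodified perturbed form $\omega_t$ and invoke the implicit function theorem at each non-degenerate zero to show the zeros persist, remain non-degenerate, and keep their Morse index, while the compactness lower bound for $\|\omega_0\|$ off small balls rules out new zeros. Your route is the more classical "stability of non-degenerate zeros under $C^1$-small perturbation" argument and avoids the cutoff construction and its norm estimates entirely; the paper's route buys the stronger conclusion that the new representative coincides with $\omega_0$ near the singularities, which is stated as part of their lemma. Two small points you should make explicit: the lex-minimum defining $\widetilde m(\xi)$ is attained because $(\Nzero^{n+1},\le_{\mathrm{lex}})$ is well-ordered and the set of Morse representatives is non-empty (this last fact is the paper's Theorem~\ref{lem:homologous to Morse}, which your argument silently presupposes in order for $m$ to be defined); and the uniqueness neighbourhoods furnished by the implicit function theorem must be chosen \emph{before} fixing the slightly smaller balls whose complement is $K$, so that the two regimes cover $M$ --- this is exactly the bookkeeping you flag, and it does go through.
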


\begin{corollary} Suppose $M$ is a closed orientable $n$-manifold, and $S\subset M$ is a 2--sided connected 
non-separating codimension--1 submanifold, and $X$ is
 the compact $n$-manifold obtained by deleting an open tubular
neighbourhood of $S$. 
Then $\partial X=S_-\sqcup S_+$ and $X$ is the union of $S_-\times[0,1]$ and some handles. 
Given a primitive class $\phi\in H^1(M;{\mathbb Z}),$
let $h(\phi)$ be the minimum number of handles for such $S$ dual to  $\phi$. 
For each $k\ge0$  there is an open subset
$U\subset S(M),$ such that $h(\phi)\le k$ if and only if $[\phi]\in U$.
\end{corollary}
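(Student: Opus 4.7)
The plan is to run the same perturbation mechanism as in Theorem~\ref{thm:intro main}, but tracking the total critical count rather than the lex-ordered index vector.

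First I would translate handles into critical points. Given a primitive $\phi \in H^1(M;\mathbb{Z})$ and a connected 2-sided non-separating dual $S$, any relative handle decomposition of $(X, S_-\times[0,1])$ with $h$ handles is realised by a Morse function $X\to[0,1]$ with $h$ critical points (one per handle, with matching index); regluing $S_-$ to $S_+$ produces a closed Morse 1-form $\omega$ on $M$ with $[\omega]=\phi$, exactly $h$ critical points, and $S$ as a connected regular level set. Conversely, any closed Morse 1-form representing $\phi$ with a connected regular level set supplies such an $(S,X)$. Hence $h(\phi)$ equals the minimum total number of critical points of a closed Morse 1-form representing $\phi$ whose regular level set is connected.

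Second, put
\[ V = \bigl\{\,\psi\in H^1(M;\mathbb{R})\setminus\{0\} : \psi \text{ is represented by some Morse 1-form with at most } k \text{ critical points}\,\bigr\} \]
and let $U\subset S(M)$ be its image. The openness of $V$ is the perturbation argument that drives Theorem~\ref{thm:intro main}: fix a finite-dimensional subspace $W$ of closed 1-forms surjecting onto $H^1(M;\mathbb{R})$. Given $\omega_0$ Morse for $\psi_0$ with at most $k$ critical points, and $\eta\in W$ sufficiently close to $0$, the form $\omega_0+\eta$ is still Morse (the Morse condition is $C^1$-open, and on the compact $M$ no new zeros appear because $\omega_0$ is bounded away from $0$ off a neighborhood of its zeros) with the same critical count, and $[\omega_0+\eta]$ sweeps out a neighborhood of $\psi_0$ as $\eta$ varies. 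Positive scaling preserves Morseness and the critical count, so $V$ is an open cone and $U$ is open in $S(M)$.

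Finally, for primitive integer $\phi$ I would verify $[\phi]\in U \iff h(\phi)\le k$. The direction $(\Leftarrow)$ is immediate from the first step. For $(\Rightarrow)$, a Morse 1-form witnessing $[\phi]\in U$ integrates to a Morse map $M\to S^1$, but its regular level set may be disconnected; one must use primitivity of $\phi$ to merge the components into a single connected dual without increasing the handle count, by ambient surgery along short arcs joining components. This connectedness refinement is the main obstacle in the argument, while the openness step reduces cleanly to the perturbation ingredient underlying Theorem~\ref{thm:intro main}.
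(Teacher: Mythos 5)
Your architecture is the same as the paper's: the openness half is exactly the perturbation mechanism of Lemmas~\ref{nearbyclass} and~\ref{lem:local stability} (your finite--dimensional--subspace version of it is fine, and correctly yields that the set of classes represented by a Morse form with a prescribed zero count is an open cone), and the translation between relative handles on $(X,S_-\times[0,1])$ and zeros of a closed Morse $1$--form is the content of Lemma~\ref{lem:hrank=mrank}. The easy direction of your step~3 is also fine.

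The gap is the one you flag yourself, and it is a genuine missing argument, not a routine refinement. ``Ambient surgery along short arcs joining components'' does not work as stated. To keep the tubed hypersurface dual to $\phi$ the tube must respect the co--orientation, so the arc must run from the positive side of one component of $F$ to the negative side of another \emph{within a single component of} $M\setminus F$; such arcs are in general not short, and their existence is not automatic --- it must be deduced from $\phi\neq 0$ (form the directed graph whose vertices are the components of $M\setminus\nu(F)$ and whose edges are the components of $F$, directed by the co--orientation; if no coherently oriented tube exists, every vertex is a source or a sink, every loop meets $F$ with alternating signs, and $\phi=0$). One must then also verify that tubing does not increase the number of handles of the complementary cobordism, which requires pushing the arc onto a trajectory of the gradient--like vector field so that its neighbourhood is a product missing all handles. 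That the issue cannot be avoided by merely re-choosing the regular value is shown already on the genus--$2$ surface: glue a four--holed sphere, built from two annuli by attaching one $1$--handle with both feet on one boundary circle and a second $1$--handle merging two boundary circles, into a closed genus--$2$ surface; the resulting circle--valued Morse map represents a primitive class, has the minimal number ($2$) of critical points, and \emph{every} regular fibre has two or three components. So one genuinely has to replace the representative, and the replacement argument is the substance of the ``if'' direction. (In fairness, the paper compresses exactly this point into the citation of Milnor in Lemma~\ref{lem:hrank=mrank}; but since you identify it as the main obstacle, it is the step your proof still owes.)
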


In Section \ref{sec:tropical rank}, we define an analogous complexity function, called the \emph{tropical rank}, for a finitely presented group $G$ by considering all manifolds with fundamental group isomorphic to $G.$ We then re-interpret this function using HNN decompositions of $G$ and relate it to the geometric invariant $\Sigma(G)$ due to Bieri, Neumann and Strebel. To state the properties of this function, let $S(G)$ be the set of equivalence classes of non-trivial homomorphisms $\varphi \co G \to \R,$ where two such homomorphisms are equivalent if they are positive scalar multiples of each other.
If $\varphi(G)$ is discrete, 
then $\rank[\varphi]$ is {\em roughly} the minimum number of generators that must be added
to the amalgamating subgroup $A$ to obtain $B$
 in an HNN extension $G\cong B*_A$ corresponding to $\varphi$.
 
\begin{theorem}\label{thm:intro groups result}
Let $G$ be a finitely presented group. The tropical rank 
$$\rank: S(G) \to \Nzero$$ 
is upper semi-continuous and has image bounded above by the minimal number $n$ of generators of $G.$ Moreover, for rational classes $[\varphi] \in S\Q(G)$ we have
\begin{enumerate}
\item $\rank [\varphi] \le n-1,$
\item $\rank [\varphi] = 0$ if and only if $[\varphi] \in \Sigma(G),$ and
\item $\rank [\varphi] = 0$ and $\rank [-\varphi] = 0$ if and only if $\ker \varphi$ is finitely generated.
\end{enumerate}
\end{theorem}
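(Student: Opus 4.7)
The plan is to realise $\rank$ through manifold constructions. Starting from a finite presentation $\langle g_1,\ldots,g_n\mid r_1,\ldots,r_k\rangle$ with $n$ equal to the minimal number of generators of $G$, thicken the associated presentation $2$-complex in the standard way to obtain a smooth compact $4$-manifold $H$ built from one $0$-handle, $n$ $1$-handles and $k$ $2$-handles; then double across $\partial H$ to obtain a closed smooth $4$-manifold $M$ with $\pi_1(M)\cong G$ and a canonical identification $H^1(M;\R)\cong\Hom(G,\R)$ which is compatible with positive scalar rays and hence descends to $S(M)\cong S(G)$. This one manifold already supplies the required upper bounds on $\rank$, while the infimum over all such $M$ in the definition of the tropical rank guarantees that the result is an invariant of $G$ alone.

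Upper semi-continuity of $\rank$ at $[\varphi]$ follows from Theorem~\ref{thm:intro main}. If $\rank[\varphi]\le j$, witnessed by a manifold $N$ with $\pi_1(N)\cong G$ and a closed Morse $1$-form with at most $j$ index-$1$ critical points representing the class corresponding to $\varphi$, then Theorem~\ref{thm:intro main} supplies an open neighbourhood of $[\varphi]$ in $S(N)\cong S(G)$ on which the Morse vector of $N$ remains bounded; the same $N$ therefore witnesses $\rank\le j$ throughout this neighbourhood. The absolute bound $\rank[\varphi]\le n$ is obtained from the doubled presentation manifold $M$ above, whose handle decomposition contains exactly $n$ index-$1$ handles, so any class admits a closed Morse $1$-form with at most $n$ critical points of index $1$.

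For a rational class $[\varphi]$ the picture sharpens. Choose a primitive integral representative $\varphi\co G\to\Z$, and on a manifold $M$ achieving the infimum take a transverse regular level set $S\subset M$ of an integral Morse $1$-form dual to $\varphi$. Cutting $M$ along $S$ yields a cobordism $X$ from $S_-$ to $S_+$ whose handle decomposition exhibits $G$ as the HNN extension $B*_A$ with $A=\pi_1(S)$, $B=\pi_1(X)$, and the two inclusions $A\hookrightarrow B$ induced by $S_\pm\hookrightarrow X$. The number of index-$1$ critical points of the Morse $1$-form on $M$ equals the number of $1$-handles of $X$, which in turn equals the minimum number of generators one must add to $A$ to generate $B$; this is the promised HNN interpretation of $\rank$. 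The bound $\rank[\varphi]\le n-1$ then follows because one of the $n$ generators of $G$ can be taken to be the stable letter of the HNN extension and so is not required to enlarge $A$ to $B$.

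With the HNN dictionary in hand, items (2) and (3) follow from the classical characterisation of the Bieri-Neumann-Strebel invariant: a discrete class $[\varphi]$ lies in $\Sigma(G)$ iff $G$ admits an HNN decomposition $B*_A$ associated to $\varphi$ with one of the two inclusions $A\hookrightarrow B$ an isomorphism, i.e.\ with no new generators needed, which by the dictionary is precisely $\rank[\varphi]=0$. By the Bieri-Neumann-Strebel finiteness criterion, $\ker\varphi$ is finitely generated iff both $[\varphi]$ and $[-\varphi]$ lie in $\Sigma(G)$, giving (3). The main technical obstacle is the dictionary itself: one direction (Morse minimum dominates the algebraic HNN minimum) is read off the handle decomposition of $X$, but the reverse requires, given a thrifty HNN decomposition of $G$, constructing a manifold $M$ with $\pi_1(M)\cong G$ whose Morse $1$-form realises this count. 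It is here that the freedom to vary $M$ in the definition of $\rank$, together with careful $1$-$2$ handle cancellation compatible with the lexicographic minimisation underlying Theorem~\ref{thm:intro main}, becomes essential.
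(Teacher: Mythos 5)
Your overall strategy matches the paper's: reduce upper semi-continuity to Theorem~\ref{thm:intro main} via the minimum over manifolds, and prove the rational statements through a dictionary between index-$1$ critical points, relative $1$-handles, and the number of generators needed to pass from $A$ to $B$ in an HNN extension $G\cong B*_A$, then quote the Bieri--Neumann--Strebel characterisations. However, the proposal leaves the two genuinely hard steps unproved. First, the dictionary itself. Cutting $M$ along a level set $S$ does \emph{not} directly exhibit $G$ as $B*_A$ with $A=\pi_1(S)$ and $B=\pi_1(X)$: the maps $\pi_1(S)\to\pi_1(X)$ need not be injective, and the relevant subgroups need not be finitely presentable. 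The paper works with ``fake HNN-extensions'' and takes $A$ and $B$ to be the \emph{images} of $\pi_1(S)$ and $\pi_1(X)$ in $G$; this yields only the inequality $\grank(\varphi)\le\hrank(\varphi)$. The reverse inequality --- realising a group-rank-minimising HNN decomposition by a closed manifold with the corresponding number of relative $1$-handles --- is the substantial content of Lemma~\ref{lem:hrank=grank}: one builds a $3$-complex $Y_2$ with $\pi_1(Y_2)=G$ from the HNN data, takes the boundary of a regular neighbourhood of an embedding $Y_2\subset\R^5\subset\R^6$ to obtain a closed $5$-manifold, and then uses a handle-trading argument (\cite[Lemma 6.15]{RS}) to show that no $1$-handles beyond those carrying the added generators $b_j$ are needed. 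You acknowledge that this direction ``becomes essential'' but do not supply it; ``careful $1$--$2$ handle cancellation'' is not the issue, since the difficulty is constructing a suitable manifold in the first place, not cancelling handles on a given one.

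Second, the bound $\rank[\varphi]\le n-1$ for rational classes. Your argument is that ``one of the $n$ generators of $G$ can be taken to be the stable letter''; this fails in general, since $\varphi$ may send no generator to $\pm1$ (for instance, generators mapping to $2$ and $3$). The paper's proof (Lemma~\ref{lem:upper bound on group rank}) uses Magnus rewriting: when some $\varphi(g_{i_0})=\pm1$, one replaces the remaining generators by $d_i=g_is^{\mp\varphi(g_i)}$ and then systematically introduces new generators $c_j$ lying in $A$ to absorb occurrences of $s$ in the relators, so that the $d_i$ (at most $n-1$ of them) are exactly the generators that must be added to $A$ to obtain $B$; when no generator maps to $\pm1$, a new stable letter $s=g_{i_0}^pg_{i_1}^q$ is introduced together with an extra relator that lets one of the rewritten generators be declared a $c$-generator, again leaving at most $n-1$ of the $d_i$. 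A smaller point: your bound $\rank\le n$ for arbitrary (possibly irrational) classes needs an argument that the $n$ $1$-handles of the presentation manifold bound $\mrank_1$ of \emph{every} class, not just of the zero class represented by the dual Morse function; the paper obtains this from Lemma~\ref{lem:local stability} combined with the fact that $\mrank_1$ is constant on rays, so that every ray meets any neighbourhood of $0$.
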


Most of the results in this paper are known. However we have collected those results that seem most relevant to low dimensional topology and geometric group theory with a view to providing a concise source. Our focus is on {\em handles} in manifolds and {\em generators} for groups
rather than Novikov homology, and is more in the spirit of the Thurston norm and BNS invariant.
We end this introduction by discussing open questions about the special case of 3--manifolds.

Suppose $M$ is a closed, orientable 3--manifold.
There is a natural identification $S(M) = S(\pi_1(M)).$ The first Morse rank on $S(M)$ gives an upper bound on the tropical rank on $S(\pi_1(M)),$ and we do not know if this is always an equality. Thurston~\cite{Thu1986} defined a semi-norm on $H^1(M; \R)$. 
Each primitive class $c \in H^1(M; \Z)$ can be represented by an embedded, orientable surface $F \subset M.$ If $F$ is connected, define $\chi_-(F) = \max \{ 0, -\chi(F)\},$ and otherwise $\chi_-(F) = \sum \chi_-(F_i),$ where the sum ranges over its connected components. Then the semi-norm of $c$ is the infimum of $\chi_-(F)$ taken over all surfaces $F$ representing $c.$ Thurston showed that the unit ball of this semi-norm is a finite sided polytope, and that the complexity zero classes correspond to the interiors of top-dimensional faces of this polytope, called the \emph{fibered faces}. In particular, the set of cohomology classes of non-singular integral classes has image that is dense in an open \emph{polyhedral} subset of $S(M).$ Apart from this, we do not know how the first Morse rank and the tropical rank are related to the polyhedral structure of the Thurston norm ball. 


\section{Manifolds}
\label{sec:Morse rank for manifold}

Let $M$ be a closed \smooth $n$--manifold. 
We show that elements of $H^1(M;{\mathbb R})$ 
are represented by Morse 1--forms.
This is used
to  define a complexity taking values in $\Nzero^{n+1}$ (using lexicograhic order)
 for a first cohomology class.  
Nearby cohomology classes are represented
by nearby Morse 1--forms with the {\em same} singularities. This implies the
set of classes with complexity less than some value is open.
We then relate this complexity to handle decompositions of $M$ and the Novikov numbers of $M.$ 
The notation and terminology in this section follows Farber \cite{Farber}, 
and most of the results are surely known to the experts.


\subsection{The Morse complexity}

Let $M$ be a closed, \smooth $n$--manifold. 
A smooth function $f:M\longrightarrow {\mathbb R}$  is {\em Morse} if
at every critical point of $f$ the Hessian $\dd^2f$ is non-singular. This Hessian is a quadratic
form 
$$-(x_1^2+\cdots + x_i^2)+(x_{i+1}^2+\cdots+x_n^2)$$ 
of {\em index} $i$.
A Morse function
gives a handle decomposition of $M$ with $i$--handles corresponding to critical points of index $i$.

Let $\omega$ be a closed 1--form on $M.$ 
For every open ball $U \subset M,$ there is a smooth function $f_U \co U \to \R$ satisfying $\omega |_U = \dd f_U$ and $f_U$ is determined uniquely by $\omega$ up to a constant. In local coordinates, we have:
$$
\omega_x = \sum a_i(x) \dx_i
$$
and hence $a_i(x) = \frac{\partial f}{\partial x_i}(x).$ It follows that the zeros of $\omega$ in $U$ are precisely the critical points of $f_U.$ A zero of $\omega$ is called \emph{non-degenerate} if the critical point of the corresponding function is a Morse singularity, and the index of the singularity is termed the \emph{Morse index} of the zero of $\omega.$ The form $\omega$ is termed \emph{Morse} if all of its zeros are non-degenerate. 

For a Morse 1--form $\omega$ denote by $m_i=m_i(\omega)$ the number of zeros of Morse index $i.$ 
This is finite since $M$ is compact. This gives a {\em complexity vector}
 $\widetilde{m}(\omega)=(m_0,\cdots,m_{n})$. We use lexicographic ordering on these vectors
 so $c< d$ if there is $i\ge 0$ with $c_i<d_i$ and $c_j=d_j$ for all $j<i$.
 
 By Theorem~\ref{lem:homologous to Morse} every element   $\xi \in H^1(M; \R)$ is represented by a 
Morse 1--form $\omega$ and we define $\widetilde{m}(\xi)$ to be the minimum of $\widetilde{m}(\omega)$ over such $\omega$. Note that 
$\widetilde{m}(\xi)=\widetilde{m}(r\xi)$ for $r>0$, however in general $\widetilde{m}(\xi)\ne \widetilde{m}(-\xi)$, so $\widetilde{m}$ decends to a well-defined function on the sphere,
$$
m:S(M) = (H^1(M; \R) \setminus \{ 0\}) / \R_+\longrightarrow\Nzero^{n+1},
$$
called the \emph{Morse complexity function}. Note that $S(M) \cong S^{k-1},$ where $k = \rank H^1(M; \R)$.

In the case $\xi=0$ a minimising Morse 1--form  is $\omega=\dd f$ for some Morse function $f$
  on $M$.  Then $\widetilde{m}(0)$ gives the least number of handles in a decomposition of $M$ using
  lexicographic ordering. In particular $\widetilde{m}(0)=(1,0,\cdots,0,1)$ if and only if $M$ is
 {\em diffeomorphic} to a sphere. The existence of exotic $7$--spheres implies
 that $\widetilde{m}(0)$ is not an invariant of homotopy type.

Instead of minimising the whole complexity vector, one may also be interested in minimising any one of its component functions. For $\xi \in H^1(M; \R),$ the \emph{$i$--th Morse rank} is
$$\mrank_i(\xi) = \min_{[\omega] = \xi}\; m_i(\omega).$$
We again have $\mrank_i(\xi)=\mrank_i(r\xi)$ for $r>0$, however in general $\mrank_i(\xi)\ne \mrank_i(-\xi)$. We therefore view $\mrank_i$ as an integer-valued function on the sphere $S(M),$ and note that in general, the $i$--th Morse rank is not the $i^{th}$ component of the Morse complexity function.
  
\begin{proposition}\label{pro:morse rank for manifold}
The $i$--th Morse rank $$\mrank_i\co S(M) \to \Nzero$$ is upper semi-continuous for each $i \in \{0,1,\ldots, n\}.$ That is, for each $x \in S(M),$ there is an open neighbourhood $N(x)$ with respect to the usual topology of $S^{k-1}$ such that $\mrank_i(y) \le \mrank_i(x)$ for all $y \in N(x).$
\end{proposition}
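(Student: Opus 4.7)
The plan is to show that if $\omega$ is a Morse $1$--form representing $\xi$ that realises $\mrank_i(\xi)$, then every class in a small neighbourhood of $\xi$ in $H^1(M;\R)$ is represented by a Morse $1$--form with the \emph{same} number of zeros of each Morse index as $\omega$; in particular the $i$--th Morse rank does not increase. Since $\mrank_i$ is constant on positive rays, this implies upper semi-continuity on the quotient $S(M)$ because the quotient map $H^1(M;\R)\setminus\{0\}\to S(M)$ is open.

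The construction goes as follows. Fix a Riemannian metric on $M$ and choose smooth closed $1$--forms $\alpha_1,\dots,\alpha_k$ whose classes form a basis of $H^1(M;\R)$. By Theorem~\ref{lem:homologous to Morse} we may take a Morse representative $\omega$ of $\xi$ with $m_i(\omega)=\mrank_i(\xi)$. Since $\omega$ is Morse and $M$ is compact, $\omega$ has finitely many zeros $p_1,\dots,p_N$, all isolated. Choose pairwise disjoint open coordinate balls $U_j\ni p_j$ and let $K=M\setminus\bigcup_j U_j$. By compactness there is $\epsilon_0>0$ with $\|\omega_x\|\ge\epsilon_0$ for all $x\in K$. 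For $t=(t_1,\dots,t_k)\in\R^k$ set
$$
\omega_t=\omega+\sum_{j=1}^k t_j\alpha_j,
$$
which represents $\xi+\sum t_j[\alpha_j]$. For $|t|$ sufficiently small, $\|\omega_t\|>0$ on $K$, so any zeros of $\omega_t$ lie in $\bigcup_j U_j$.

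In a coordinate ball $U_j$, writing $\omega=\sum a_\ell\,\dx_\ell$ and $\alpha_m=\sum b^{(m)}_\ell\,\dx_\ell$, the zeros of $\omega_t$ solve $a_\ell+\sum_m t_m b^{(m)}_\ell=0$. Non-degeneracy of $\omega$ at $p_j$ means the Jacobian $\bigl(\partial a_\ell/\partial x_r\bigr)$ is invertible at $p_j$, so the implicit function theorem produces, for small $|t|$, a unique zero $p_j(t)\in U_j$ depending smoothly on $t$, with invertible Jacobian. Because $\omega_t$ is closed, this Jacobian is symmetric and equals the Hessian of a local primitive; its signature (hence the Morse index) is a locally constant function of $t$ on the open set of invertible symmetric matrices, so $p_j(t)$ has the same Morse index as $p_j$. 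Hence $\omega_t$ is Morse with $m_\ell(\omega_t)=m_\ell(\omega)$ for every $\ell$, and in particular $\mrank_i(\xi+\sum t_j[\alpha_j])\le m_i(\omega_t)=\mrank_i(\xi)$. Taking the image in $S(M)$ of the open neighbourhood of $\xi$ on which this holds, and using that $\mrank_i$ is $\R_+$--invariant, gives the desired open neighbourhood $N([\xi])$.

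The only real obstacle is the local perturbation step: one must verify both that perturbed zeros remain non-degenerate (immediate from the implicit function theorem) and that their Morse indices are preserved (using that for closed $1$--forms the relevant Jacobian is symmetric, so the index is the signature of a non-degenerate symmetric matrix, which is locally constant under perturbation). The rest is a compactness/normalisation argument.
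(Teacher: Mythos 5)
Your proof is correct, but it takes a genuinely different route from the paper. The paper first shows (Lemma~\ref{nearbyclass}, via pullback from a torus) that every class near $[\omega]$ has a representative $\omega+\phi$ with $\|\phi\|$ small, and then (Lemma~\ref{lem:local stability}) uses bump functions to surger $\omega+\phi$ so that the resulting form \emph{literally coincides with $\omega$} on a neighbourhood of the zeros of $\omega$ and has no zeros elsewhere; the complexity vector is then unchanged by inspection. You instead restrict to the finite-dimensional affine family $\omega_t=\omega+\sum t_j\alpha_j$ and track the zeros directly by the implicit function theorem, using symmetry of the Jacobian of a closed form (it is the Hessian of a local primitive) to see that the signature, hence the Morse index, is preserved. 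Your argument is more in the spirit of standard transversality/persistence of nondegenerate zeros and is arguably shorter; the paper's construction yields the marginally stronger conclusion that the nearby representative agrees with $\omega$ on a fixed neighbourhood of its zero set. Both arguments in fact preserve the entire vector $\widetilde{m}$, so both also prove Theorem~\ref{thm:intro main}. One small point to tighten: the implicit function theorem gives uniqueness of the zero $p_j(t)$ only in some possibly smaller ball $V_j\subset U_j$, so you should either shrink the $U_j$ at the outset or add the observation that $\|\omega\|$ is bounded below on the compact set $\overline{U_j}\setminus V_j$ (since $p_j$ is the only zero of $\omega$ there), so that for small $|t|$ no extra zeros of $\omega_t$ appear in $U_j\setminus V_j$; this is routine and does not affect the structure of the proof.
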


Since $S(M)$ is compact, it follows that the range of $\mrank_i\co S(M) \to \Nzero$ is finite. Since the function is upper semi-continuous, we have the following immediate consequence:

\begin{corollary}\label{cor:minimal points}
The set $\Sigma_i$ of all points of minimal $i$--th Morse rank is open in $S(M).$ Moreover, rational points are dense in $\Sigma_i.$
\end{corollary}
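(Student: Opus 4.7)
The plan is to read both statements off directly from Proposition~\ref{pro:morse rank for manifold}, using the integrality of $\mrank_i$ and the density of rational classes in $H^1(M;\R)$; no deep new input is needed beyond what has already been set up in this section.

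First I would set $m_0 := \min_{x \in S(M)} \mrank_i(x)$, noting that this minimum is attained because $\mrank_i$ takes values in the well-ordered set $\Nzero$ (compactness of $S(M)$, used in the remark preceding the corollary only to bound the range, is not required here). By construction $\Sigma_i = \mrank_i^{-1}(m_0)$ is nonempty. For openness, I would take an arbitrary $x \in \Sigma_i$ and apply Proposition~\ref{pro:morse rank for manifold} to obtain an open neighborhood $N(x) \subset S(M)$ on which $\mrank_i(y) \le \mrank_i(x) = m_0$ for every $y$. Since $m_0$ is the minimum of $\mrank_i$ over $S(M)$, this inequality must in fact be an equality throughout $N(x)$, so $N(x) \subset \Sigma_i$, giving that $\Sigma_i$ is open.

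For the density statement, I would use that the quotient map $q \co H^1(M;\R) \setminus \{0\} \to S(M)$ is open, being the quotient by the free action of $\R_+$, and that $H^1(M;\Q) \setminus \{0\}$ is dense in $H^1(M;\R) \setminus \{0\}$ (as $\Q^k$ is dense in $\R^k$ for $k = \rank H^1(M;\R)$). Consequently the image of the rational cohomology classes is dense in $S(M)$. Since $\Sigma_i$ is a nonempty open subset of $S(M)$ and rational points are dense in the whole sphere, rational points must be dense in $\Sigma_i$ as well.

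I do not anticipate any genuine obstacle here; the corollary is a formal consequence of the upper semi-continuity established in Proposition~\ref{pro:morse rank for manifold} together with the observations that $\Nzero$ is well-ordered and that rational classes are dense in first cohomology.
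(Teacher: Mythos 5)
Your proof is correct and is exactly the ``immediate consequence'' the paper has in mind: openness of $\Sigma_i$ follows from upper semi-continuity (Proposition~\ref{pro:morse rank for manifold}) once one knows the minimum of an $\Nzero$-valued function is attained, and density of rational points in the open set $\Sigma_i$ follows from density of rational classes in $H^1(M;\R)\setminus\{0\}$ pushed down through the quotient to $S(M)$. Your side remark that compactness of $S(M)$ is not needed here (only for finiteness of the range) is also accurate.
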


\begin{remark}
For all rational points, we have $\mrank_0[\xi] = 0 = \mrank_n[\xi]$ since they can be represented by circle valued Morse functions. However, Corollary \ref{cor:minimal points} does not imply that
$$\mrank_0 \equiv 0 \equiv \mrank_n.$$
Are there examples, where the functions $\mrank_0$ and $\mrank_n$ are not trivial?
\end{remark}

\begin{example}[(The torus)]\label{exa:torus}
Consider the torus $T^k=V/{\mathbb Z}^k,$ where $V={\mathbb R}^k$.
A linear map $L\in V^*$ gives an exact 1--form $\dd L$ on ${\mathbb R}^k,$ which
is preserved by translations, and therefore is the pullback of a closed 1--form $\omega_L$ on $T^k$. Since there are no singularities, $\mrank_1$ vanishes identically.
\end{example}

\begin{example}
There are manifolds $M$ with the property that $\mrank_1\co S(M)\to \Nzero$ does not vanish identically but is zero on a dense set; for instance 3--manifolds with each face of the Thurston norm ball a fibred face, such as (zero-framed surgery on) the complements of the Whitehead link or the Borromean rings (see \cite{Thu1986}).
\end{example}

\begin{example}[(Surfaces)]
Let $F$ be a surface of genus $g\ge 1,$ and $\xi \in H^1(F, \Z)$ be a non-trivial primitive element. Then $\xi$ is dual to a circle valued Morse function $F \to S^1$ with no critical points of indices 0 or 2. Since critical points of index 1 can only be cancelled against those of index 0 or 2, the Euler characteristic of $F$ implies $m(\xi) = (0, 2g-2, 0)$ and $\mrank_1(\xi) = 2g-2.$
\end{example}


\subsection{Proofs of Theorem~\ref{thm:intro main} and Proposition~\ref{pro:morse rank for manifold}}

Endow $M$ with a Riemannian metric. We let 
$$B(x;r) = \{ y \in M \mid d(x,y) < r\}.$$
If $B(x;r)$ is called an \emph{open ball}, then it is understood that $r$ is chosen such that $B(x;r)$ is diffeomorphic to $\Int B^n \subset \R^n.$ This condition holds for $r$ sufficiently small. At each $x \in M,$ we have a well-defined real number, 
$$|| \ \omega \ ||_x = \sqrt{\langle \omega_x, \omega_x\rangle_x},$$ 
arising from the inner product at $x.$ We have $|| \ \omega \ ||_x = 0$ if and only if $x$ is a zero of $\omega.$ 
The {\em norm} of $\omega$ is $||\omega||=\sup ||\omega||_x$.
A closed 1--form $\omega$  defines a de-Rham cohomology class $[\omega]\in H^1(M;{\mathbb R})$.

It is well known that the set of Morse functions is dense in the set of smooth functions.
Lalonde, McDuff and Polterovich \cite[Lemma 5.1]{LMcP} show that the harmonic representative  of a
 non-trivial  class in $H^1(M,{\mathbb R})$ is 
a Morse 1--form  for a generic Riemannian metric. 
We give an elementary proof of
the density of Morse forms in every cohomology class. Another proof is in \cite[Ch.\thinspace 2 Theorem 1.25]{Pajitnov}.

\begin{theorem}\label{lem:homologous to Morse} If $M$ is a closed Riemannian manifold, then the
set of Morse 1--forms in each cohomology class is dense.\end{theorem}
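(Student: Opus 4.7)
Any 1--form cohomologous to $\omega$ has the form $\omega + \dd f$ for some smooth $f \co M \to \R$, so to show density (in the norm $\|\cdot\|$ defined above) it suffices to produce, for each $\epsilon > 0$, a function $f$ with $\|\dd f\| < \epsilon$ such that $\omega + \dd f$ is Morse. I would construct $f$ inductively over a finite cover of $M$ by balls on which $\omega$ is exact.

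Choose such a cover $U_1, \ldots, U_N$ (with $\omega|_{U_i} = \dd g_i$) together with a shrunken cover $V_1, \ldots, V_N$ satisfying $\overline{V_i} \subset U_i$ and $\bigcup V_i = M$. Adding an exact form $\dd \phi$ with $\phi$ smooth and supported in $U_i$ to a closed 1--form $\alpha$ satisfying $\alpha|_{U_i} = \dd h$ replaces $h$ by $h + \phi$ on $U_i$, so the zeros of $\alpha + \dd \phi$ in $U_i$ are precisely the critical points of $h + \phi$. A standard consequence of Sard's theorem then gives: for $h \in C^\infty(U_i)$, compact $K \subset U_i$, and $\delta > 0$, there exists a smooth $\phi$ supported in $U_i$ with $\|\phi\|_{C^2} < \delta$ such that $h + \phi$ is Morse on $K$. (Take $\phi(x) = \beta(x)\cdot(a \cdot x)$ in local coordinates, where $\beta$ is a cutoff equal to $1$ on $K$ and $a \in \R^n$ is chosen so that $-a$ is a regular value of $\nabla h$, possible for arbitrarily small $a$ by Sard; then $h + \phi = h + a \cdot x$ on $K$, whose Hessian equals that of $h$ and is non-singular wherever $\nabla h = -a$.)

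Now I would run the induction. Set $\omega_0 = \omega$. At step $i+1$, assuming $\omega_i$ has only non-degenerate zeros on an open neighborhood of $K_i = \overline{V_1} \cup \cdots \cup \overline{V_i}$, write $\omega_i|_{U_{i+1}} = \dd h$ and apply the above statement to find $\phi_{i+1}$ supported in $U_{i+1}$ with $h + \phi_{i+1}$ Morse on $\overline{V_{i+1}}$ and $\|\dd\phi_{i+1}\| < \epsilon/N$. Set $\omega_{i+1} = \omega_i + \dd \phi_{i+1}$, which then has only non-degenerate zeros on an open neighborhood of $K_{i+1}$. After $N$ steps, $\omega_N = \omega + \dd f$ with $f = \sum \phi_i$ is Morse on all of $M$ and satisfies $\|\dd f\| < \epsilon$.

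The main obstacle is ensuring that $\dd\phi_{i+1}$ does not destroy the non-degeneracy of the zeros of $\omega_i$ on a neighborhood of $K_i$ achieved at earlier steps. I would handle this via openness: the condition ``all zeros of $\alpha$ in an open neighborhood of a compact set $K$ are non-degenerate'' is $C^1$--open in the closed 1--form $\alpha$, and by compactness of $K_i$ there is a uniform positive margin of perturbation within which non-degeneracy on some neighborhood of $K_i$ persists. Shrinking $\|\phi_{i+1}\|_{C^2}$ further if necessary keeps $\dd\phi_{i+1}$ inside this margin. Since only $N$ steps occur, these margins do not collapse to zero and the induction closes.
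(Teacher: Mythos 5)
Your argument is correct, and its skeleton is the same as the paper's: a finite cover of $M$ by balls on which the form is exact, and an induction that Morse-ifies the form one ball at a time by adding an exact perturbation supported in that ball. The two proofs differ in the local step and, more substantially, in how progress from earlier steps is protected. You perturb the local primitive by $\beta(x)(a\cdot x)$ with $a$ a Sard-generic small vector, and you preserve non-degeneracy over the previously treated region by invoking $C^1$--openness of the condition ``all zeros in a neighbourhood of a compact set are non-degenerate''; since the margin needed at step $i+1$ depends only on $\omega_i$, which is fixed before $\phi_{i+1}$ is chosen, the induction closes. The paper instead quotes density of Morse functions on a ball as a black box and interpolates $f=g+\lambda(h-g)$ with a cutoff $\lambda$ forced to vanish on a neighbourhood $V$ of the zeros already produced, so those zeros are literally untouched, and new critical points in the interpolation region are excluded by the explicit estimate $||\dd f||>\mu/3$ there. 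Your route trades that quantitative bookkeeping for a soft stability argument, which is fine provided (as you note) the neighbourhood of $K_i$ is allowed to shrink; the one small adjustment needed is to apply your Sard lemma with $K$ a slightly larger compact set containing $\overline{V_{i+1}}$ in its interior, so that non-degeneracy holds on an open neighbourhood of $\overline{V_{i+1}}$ and not merely on it.
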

\begin{proof} We must show that given $\epsilon >0$ and a closed 1--form $\omega,$ there is a Morse 1--form, $\eta,$ 
such that $[\eta]=[\omega]\in H^1(M;{\mathbb R})$ and $||\omega-\eta||<\epsilon$.

Suppose $\psi$ is a closed \smooth 1--form on $M$ and 
$U\subset M$ is an open set such that $\psi|\overline U$ is Morse, and $\psi|\overline U$ 
has finitely many zeros which are all in $U$. Suppose
$B:=B(x;r)\subset M$ is an open ball contained in larger open balls $2B:=B(x;2r)$ and $3B=B(x;3r)$. 
Given $\delta>0$ 
we show below that  there is a \smooth 1--form $\psi_1$ on $M$ such
  that $\psi=\psi_1$ outside $3B$ and $||\psi_1-\psi||<\delta$ and 
 $\psi_i|\cls(U\cup B)$ is Morse with finitely many singularities, which are all in $U\cup B$.
Since
$\psi_1$ and $\psi$  are equal outside the contractible set $3B$ it follows they
represent the same cohomology class.

 Assuming this, there is a finite cover of $M$ by balls $B_i:=B(x_i;r)\subset M$ with $1\le i\le m,$ each contained in
 balls $B(x_i;2r)\subset B(x_i;3r)$. We start with $\psi_0=\omega$ and $U_0=\emptyset$ and $\delta=\epsilon/m$ 
 and inductively apply the above to produce
 a  1--form $\psi_k$ which is Morse on $U_k=\cup_{i=1}^k B_i$. Then $\eta=\psi_m$ is as required.

It remains to prove the claim. Since $\psi$ has finitely many zeros in $U,$ there is an open set $V$ with
 $\overline{V}\subset U$ and $V$ contains all the zeros of $\psi|\overline U$.  Set
   $\mu=\inf ||\psi ||_x$ where the infimum is over $x$ in the compact set $\overline U\setminus V$. Then $\mu>0$.
 
The closed sets $C=\cls(B\setminus U)$ and $D=\cls(M\setminus 2B)\cup\cls(V)$ are disjoint.
 Thus there is a smooth function
 $\lambda:M\longrightarrow [0,1]$ with $\lambda(C)=1$ and $\lambda(D)=0$. 
 Let $K=1+ ||\dd\lambda||$.
 
Since $\psi$ is closed and $3B$ is a ball, integrating $\psi$ along paths starting at the base point gives
 a smooth function $g:3B\longrightarrow{\mathbb R}$ with $\dd g =\psi$ on $3B$.
 Since Morse functions are dense in $C^\infty(\Int(B^n)),$ there is a Morse function
$h:3B\longrightarrow{\mathbb R}$ with $\sup |g-h|<\mu/3K$ and $|| \dd(g-h)|| <\mu/3$. 

The function $f:3B\longrightarrow{\mathbb R}$ given by
$f=g+\lambda\cdot (h-g)$ equals $h$
on $C$ and $g$ on $3B\cap(V\cup(3B\setminus 2B))\subset D$.
 Thus  $f$ is Morse on $3B\cap(C\cup V)$. We have
 $$\dd f=\dd g +  (h-g)\dd\lambda + \lambda\dd(h-g)$$
 There are no critical
points of $f$ in $3B\cap (\overline U\setminus V)$ since on this set
$  ||\dd f||>\mu/ 3$
because $||\dd g||\ge\mu$ and $||\dd\lambda||\le K$ so $||(h-g)\dd\lambda||<\mu/3$
and $||\lambda\dd(h-g)||\le ||\dd(h-g)||<\mu/3$. 

Putting this together, $f$ is Morse on the subset of $3B$ contained in $C\cup V\cup (\overline{U}\setminus V)= C\cup\overline{U}=B\cup\overline{U}$. 

Observe that
$\psi$ and $\dd f$ are equal on  $3B\setminus 2B$. Thus we may define $\psi_1$ to be $\psi$ on $M\setminus 2B$
and $\dd f$ on $3B$. Then $\psi_1$ is Morse on 
$3B\cap(B\cup\overline{U})$ because
 $\psi_1=\dd f$ there. It is Morse on $\overline{U}\setminus 2B$ because it is $\psi$ there. Hence $\psi_1$ is 
Morse on $\overline{U}\cup B$ as required.
\end{proof}

The next result implies that nearby homology classes are represented by nearby 1--forms.
\begin{lemma}\label{nearbyclass} If $M$ is a closed Riemannian manifold and $\epsilon>0,$ there is a neighbourhood $U\subset H^1(M;{\mathbb R})$
of $0$ with the property that for every $\phi\in U,$ there is a \smooth closed 1--form $\eta$ with $[\eta]=\phi$ and $||\eta||<\epsilon$.
\end{lemma}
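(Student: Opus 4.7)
The plan is to exploit the finite-dimensionality of $H^1(M;\R)$ together with a linear choice of smooth representatives, so that the norm of the representative depends continuously (indeed linearly) on the cohomology class.

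Since $M$ is a closed manifold, $H^1(M;\R)$ is a finite-dimensional real vector space; let $k = \rank H^1(M;\R)$ and fix a basis $\phi_1,\ldots,\phi_k$. For each $i$, choose a smooth closed 1--form $\eta_i$ with $[\eta_i]=\phi_i$ (for instance, the harmonic representative with respect to the given Riemannian metric, or any other fixed smooth representative). Define a linear map
$$
L\co H^1(M;\R) \longrightarrow \Omega^1(M), \qquad L\Bigl(\sum_{i=1}^k c_i\phi_i\Bigr) = \sum_{i=1}^k c_i\eta_i,
$$
so that $[L(\phi)]=\phi$ for every $\phi \in H^1(M;\R)$ and $L(\phi)$ is automatically smooth and closed.

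Next I would estimate the sup-norm. Using any norm on $H^1(M;\R)$ (all norms on a finite-dimensional space are equivalent), for $\phi=\sum c_i\phi_i$ we get
$$
\| L(\phi)\| \;\le\; \sum_{i=1}^k |c_i|\,\|\eta_i\| \;\le\; C\,\|\phi\|
$$
where $C = \sum_i \|\eta_i\|$ and the second inequality uses equivalence of norms. In particular $\phi \mapsto \|L(\phi)\|$ is continuous and vanishes at $0$.

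Finally, set $U = \{\phi \in H^1(M;\R) \mid \|L(\phi)\| < \epsilon\}$. This is open (pre-image of an open set under a continuous map) and contains $0$, and for any $\phi \in U$ the 1--form $\eta := L(\phi)$ is smooth, closed, represents $\phi$, and satisfies $\|\eta\|<\epsilon$. There is no real obstacle here: the entire content is that $H^1(M;\R)$ is finite-dimensional, so a linear section of the quotient map from closed 1--forms to cohomology classes exists and is automatically continuous. The only modest care is in picking smooth representatives $\eta_i$ at the outset, which is standard (e.g.\ harmonic forms, or partitions of unity).
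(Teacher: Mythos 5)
Your proof is correct and is essentially the paper's argument: both construct a linear, hence continuous, section of the projection from closed $1$--forms to $H^1(M;\R)$ and let finite-dimensionality do the rest. The only difference is cosmetic --- the paper realises the section by pulling back the translation-invariant forms $\omega_L$ under a map $M\to T^{\beta_1(M)}$ inducing an isomorphism on $H^1$, whereas you choose a basis and arbitrary smooth closed representatives directly.
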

\begin{proof} 
We first prove the result for the $k$--torus $T^k=V/{\mathbb Z}^k,$ where $V={\mathbb R}^k$.
As noted in Example~\ref{exa:torus}, a linear map $L\in V^*$ gives an exact 1--form $\dd L$ on ${\mathbb R}^k,$ which
is preserved by translations, and therefore is the pullback of a closed 1--form $\omega_L$ on $T^k$.
The map $V^*\longrightarrow \Omega^1(T^k)$ given by $L\mapsto \omega_L$ is continuous and 
$V^*\longrightarrow H^1(T^k;{\mathbb R})$ given by $L\mapsto[\omega_L]$ is an
isomorphism.

For the general case let $k=\beta_1(M).$ There is a \smooth map $f:M\longrightarrow T^k,$ which induces
an isomorphism $f^*:H^1(T^k;{\mathbb R})\longrightarrow H^1(M;{\mathbb R})$. Then $f^*(\omega_L)$
is a closed \smooth 1--form  on $M,$ which varies continuously with $L$ and this gives an isomorphism $V^*\longrightarrow H^1(M;{\mathbb R})$.
\end{proof}

Next is a local stability result: nearby cohomology classes are 
represented by Morse 1--forms with the {\em same} zeros.

\begin{lemma} \label{lem:local stability}
Given a closed Morse 1--form $\omega$ and $\delta>0$,
 there is a neighbourhood $U\subset H^1(M;{\mathbb R})$ of 
$0$ with the property that
if $\alpha\in U,$ 
  then there exists a Morse 1--form $\eta$ with $[\eta]=\alpha+[\omega]$  and $||\omega-\eta||<\delta$ and there
  is an open set $A\subset M,$ which contains all the
  zeros of $\eta$ and of $\omega$ and $\omega|A=\eta|A$. In particular  $\widetilde{m}(\eta) = \widetilde{m} (\omega)$.
\end{lemma}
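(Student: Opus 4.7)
The strategy is to represent the perturbation $\alpha$ by a small closed 1--form, then modify it by an exact form so that it vanishes in a neighbourhood $A$ of the (finitely many) zeros of $\omega$. Concretely, since $\omega$ is Morse, its zero set $Z=\{p_1,\dots,p_N\}$ is finite. Choose pairwise disjoint open balls $B_j \Supset B_j'\ni p_j$ and let $A=\bigsqcup B_j'$, $A''=\bigsqcup B_j$. By compactness of $M\setminus A$ and non-vanishing of $\omega$ there, set
$$\mu = \inf_{x\in M\setminus A}\|\omega\|_x > 0.$$

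Given $\alpha \in H^1(M;\mathbb R)$ small, apply Lemma~\ref{nearbyclass} to obtain a smooth closed 1--form $\beta$ with $[\beta]=\alpha$ and $\|\beta\|$ as small as desired. Since each $B_j$ is contractible, $\beta|_{B_j}=dg_j$ for a smooth function $g_j$; choosing $g_j(p_j)=0$ and integrating along radial paths gives the bound $\|g_j\|_\infty \le \mathrm{diam}(B_j)\cdot \|\beta\|$, so $\|g_j\|_\infty \to 0$ as $\|\beta\|\to 0$. Let $\lambda:M\to[0,1]$ be a smooth bump with $\lambda\equiv 1$ on $A$ and $\mathrm{supp}\,\lambda \subset A''$, and let $K = 1 + \|d\lambda\|$. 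Assemble the $g_j$ into $g$ on $A''$ and define $h=\lambda g$, extended by zero, so $h$ is a smooth function on $M$. Set
$$\eta = \omega + \beta - dh.$$
Then $[\eta] = [\omega] + \alpha$ since $dh$ is exact, and on $A$ we have $dh = dg = \beta$, so $\eta|_A = \omega|_A$; in particular $\omega$ and $\eta$ agree on the neighbourhood $A$ of every zero of $\omega$.

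Next I estimate $\|\omega-\eta\|$. Outside $A''$, $\omega-\eta=-\beta$, so $\|\omega-\eta\|\le \|\beta\|$. On $A''$, a direct computation gives
$$dh - \beta = (\lambda-1)\beta + g\, d\lambda,$$
hence $\|\omega-\eta\| = \|dh-\beta\| \le \|\beta\| + K\cdot \|g\|_\infty \le (1 + K\cdot\max_j \mathrm{diam}(B_j))\,\|\beta\|$. Shrinking the neighbourhood $U\subset H^1(M;\mathbb R)$ of $0$ so that representatives $\beta$ satisfy $\|\beta\| < \min(\delta,\mu/2)/(1+K\cdot\max_j\mathrm{diam}(B_j))$ yields both $\|\omega-\eta\|<\delta$ and $\|\omega-\eta\|<\mu/2$.

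It remains to check that $\eta$ is Morse with the same zero set and indices as $\omega$. On $A$ we have $\eta=\omega$, which is Morse with zeros exactly $Z$. On $M\setminus A$, the reverse triangle inequality gives $\|\eta\|_x \ge \|\omega\|_x - \|\omega-\eta\|_x > \mu - \mu/2 > 0$, so $\eta$ has no zeros outside $A$. Therefore the zeros of $\eta$ equal $Z$, and at each $p_j$ the germ of $\eta$ agrees with that of $\omega$, so the Hessians coincide and $\eta$ is non-degenerate of the same Morse index at $p_j$. Consequently $\widetilde{m}(\eta)=\widetilde{m}(\omega)$. The main technical point is the Poincaré-lemma estimate controlling $\|g\|_\infty$ by $\|\beta\|$, which is what allows the cutoff correction $dh$ to remain small and preserve the Morse structure.
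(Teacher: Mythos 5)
Your argument is correct and follows essentially the same route as the paper: represent $\alpha$ by a uniformly small closed form via Lemma~\ref{nearbyclass}, make it vanish near the zeros of $\omega$ by subtracting an exact cutoff correction whose size is controlled by a sup-norm bound on a local primitive, and rule out new zeros using the positive lower bound $\mu$ for $||\omega||_x$ off the chosen balls. The only real difference is cosmetic: you subtract $d(\lambda g)$ from $\omega+\beta$ globally (so the perturbing form never needs to be Morse), whereas the paper interpolates between local primitives of $\omega$ and the perturbed form ball by ball; both give the same conclusion.
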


\begin{proof} 
Let $\omega$ be a Morse 1--form. Then $\omega$ has a finite number of zeros, say $z_1,...,z_m.$ For each $z_i$ choose a radius $r_i > 0$ such that 
$B_i=B(z_i; r_i)$, $2B_i= B(z_i; 2 r_i)$ and $3B_i=B(z_i;3r_i)$ are open balls. 
We may do this so the balls $3B_i$ are pairwise disjoint. Define $A = \bigcup B_i$ and $3A=\cup 3B_i$ and set 
$$\delta= \inf \{ || \omega ||_x \ : \ x\in M\setminus A\}.$$
Then $\delta>0$ because  $M\setminus A$ is compact.
Choose a smooth function $\lambda_i:M\longrightarrow [0,1]$ with $\lambda_i(B_i)=1$ and 
$\lambda_i(M\setminus 2B_i)=0$.
Choose $\epsilon>0$ smaller than $\delta/3$ and $(1/3)(1+||\dd\lambda_i||)^{-1}$ for every $i$.

Let $U\subset H^1(M;{\mathbb R})$ be the neighbourhood of $0$ given by Lemma~\ref{nearbyclass}. 
Given $\alpha\in U$ there is a closed 1--form $\phi$ representing $\alpha$ with $||\phi||<\epsilon$.
By Theorem~\ref{lem:homologous to Morse} we may assume $\phi$ is Morse.

Since $||\phi||<\delta$ all the zeros of $\eta=\omega+\phi$ are contained in $A$. 
Below we construct a Morse 1--form
$\eta'$ which equals $\eta$ in $M\setminus 3A$ so $[\eta']=[\eta]=\alpha+[\omega].$ Moreover
$\eta'=\omega$ on $A$
completing the proof.

To construct $\eta'$ we modify $\eta$ in $B=B_i\subset 3B_i=3B$ as follows. 
There are smooth functions $f,g: 3B\longrightarrow{\mathbb R}$ with $\omega=\dd f$ and $\phi=\dd g$ on $3B$. Set
$$h= f +\lambda_i(g-f)$$
Then $h=g$ on $B$ and $h=f$ on $3B\setminus 2B$. A calculation as in the proof of
 Theorem~\ref{lem:homologous to Morse} shows $h$ is Morse and with one singular point at $z_i$. 
Define $\eta'$ to be $\eta$ in $M\setminus 2B$ and  $\dd h$ in $3B$. On the overlap $3B\setminus 2B$ these
are both $\eta$, and  $\eta=\omega$ on $B$.
Since the balls $3B_i$ are pairwise disjoint these modifications can be done independently.
 \end{proof}

\begin{proof}[Proofs of Theorem~\ref{thm:intro main} and Proposition \ref{pro:morse rank for manifold}]
The statements follows directly from the local stability result (Lemma~\ref{lem:local stability}) together with the 
following two facts. First, the topology on $S(M)$ induced by the norm 
agrees with the usual topology. Second, for each class $\xi \in H^1(M; \R),$ 
one may choose a representative $\omega_1$ with the property that $\widetilde{m}(\omega_1) = m(\xi),$ and a representative $\omega_2$ with the property that $\mrank_i(\omega_2) = \mrank_i(\xi).$
\end{proof}


\subsection{The $i$--th handle rank of a rational class}

There is a topological description of the Morse complexity and the Morse ranks for rational classes. Suppose $(N,\partial_-N,\partial_+N)$ is a {\em cobordism}. Thus $N$ is a \smooth compact connected $n$-dimensional manifold  with $\partial N=\partial_-N\sqcup\partial_+N$. 
Suppose ${\mathcal H}$ is a handle decomposition of $N$ given by attaching a finite number of handles to a collar $\partial N_-\times[0,1]$ of $\partial_-N$. 
Such a handle decomposition determines a vector 
$$\hat h(N,\partial_-N,{\mathcal H})=(h_0,h_1,\cdots,h_n)\in \Nzero^{n+1},$$ 
where $h_i$ is the number of handles of index $i$.
The {\em handle complexity} of $N$ rel $\partial_-N$ is the vector 
$$\tilde h(N,\partial_-N) = \min_{\mathcal H}  \; \hat h(N,\partial_-N,{\mathcal H}),$$ 
where the minimum is taken with respect to lexicographical ordering over all such handle decompositions. Thus $\tilde  h(N,\partial_-N)=0$ if and only if $N$ is diffeomorphic to $\partial N_-\times[0,1]$.

Similarly, the \emph{$i^{th}$ handle rank} of $N$ rel $\partial_-N$ is 
\begin{equation}\label{eq:i-th handle number}
\hrank_i(N,\partial_-N) = \min_{\mathcal H} \; h_i.
\end{equation}
 
 A primitive element  $\beta\in H_{n-1}(M;{\mathbb Z})$ is represented by a compact, connected, \smooth 2--sided submanifold $V\subset M$ with tubular neighbourhood $\nu(V)$. 
 An orientation of $M$
determines a {\em transverse orientation} to $V$ and hence a cobordism $(N=M\setminus\nu(V),V_-,V_+)$. The minimum of $\tilde h(N, V_-)$ over such $V$ is the {\em handle complexity} $h(\beta)$ of $\beta$, and the minimum of $\hrank_i(N, V_-)$ is the \emph{$i^{th}$ handle rank} $\hrank_i(\beta)$ of $\beta.$
In general $h(-\beta)\ne h(\beta)$ so the choice of orientation on $M$ is necessary.

Finally, if $\xi \in H^1(M; \Z)$ is primitive, denote the Poincar\'e dual to $\xi$ by $D_M(\xi) \in H_{n-1}(M;{\mathbb Z}).$ Then define 
$$h(\xi) = h(D_M(\xi)) \qquad\text{and}\qquad \hrank_i(\xi) = \hrank_i(D_M(\xi))$$
to be the handle complexity  and $i^{th}$ handle rank of $\xi$ respectively.

\begin{lemma}\label{lem:hrank=mrank} Suppose $M$ is a smooth closed oriented manifold.
For each primitive class $\xi\in H^1(M; \Z),$ 
we have $h(\xi) = m(\xi)$ and $\hrank_i(\xi) = \mrank_i (\xi).$
\end{lemma}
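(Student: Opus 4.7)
The plan is to invoke the classical correspondence between integral closed Morse 1--forms on $M$, circle-valued Morse functions $f \co M \to S^1$, and handle decompositions of the cobordism obtained by cutting $M$ along a regular fibre. Under this dictionary, zeros of a Morse 1--form of index $i$ correspond to $i$--handles, so matching counts give both equalities $h(\xi) = m(\xi)$ and $\hrank_i(\xi) = \mrank_i(\xi)$ by comparing the two minimisations (lexicographic and componentwise, respectively) through the bijection.

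For the inequality $m(\xi) \le h(\xi)$ (and $\mrank_i(\xi) \le \hrank_i(\xi)$), I would begin with a connected $V$ dual to $\xi$ realising the minimum and a minimising handle decomposition $\mathcal H$ of $N = M \setminus \nu(V)$ rel $V_-$. Standard Morse theory on cobordisms yields a Morse function $\tilde f \co N \to [0,1]$ with $\tilde f^{-1}(0) = V_-$, $\tilde f^{-1}(1) = V_+$, a product structure near $\partial N$, and critical points realising $\mathcal H$. Gluing $V_+$ to $V_-$ along the canonical diffeomorphism (both being copies of $V$), $\tilde f$ descends to a circle-valued Morse map $f \co M \to S^1$, and $\omega = f^*(\dd\theta)$ is a closed Morse 1--form representing $\xi$ with $m_i(\omega) = h_i(\mathcal H)$ in every index.

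For the reverse inequality, I would start from a minimising Morse 1--form $\omega$ representing $\xi$. Since $\xi$ is integral, integration of $\omega$ produces a smooth map $f \co M \to S^1 = \R/\Z$ with $f^*(\dd\theta) = \omega$, whose critical points agree with the zeros of $\omega$ in both position and Morse index. For a regular value $p$, cutting $M$ along $V_0 = f^{-1}(p)$ yields a cobordism on which $f$ lifts to a real-valued Morse function, inducing a handle decomposition with $h_i = m_i(\omega)$. The step I expect to require the most care is reconciling this with the definition of $h$ and $\hrank_i$, which minimises only over connected representatives: in general $V_0$ may have several components. Using the primitivity of $\xi$, I would argue that the components of $V_0$ can be successively tube-summed along arcs in $M$ to give a connected $V$ dual to $\xi$, and that the induced modifications of the cobordism and its handle decomposition can be arranged without increasing any $h_i$; equivalently, one shows that a lex-minimising $\omega$ may always be chosen so that $f^{-1}(p)$ is already connected. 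Once this reduction is in place, the indexed counts match under the bijection, and the two equalities follow.
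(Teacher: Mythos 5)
Your argument is essentially the paper's proof: the paper disposes of this lemma in one line by citing the classical correspondence between Morse functions and handle decompositions of the cobordism obtained by cutting along a regular level set, and your proposal simply unpacks that dictionary in both directions. You also correctly flag the one point the paper's citation leaves implicit --- that a regular level set of the circle-valued map need not be connected while $h$ and $\hrank_i$ are defined via connected representatives --- and your proposed fix (tubing components, using primitivity, without creating new critical points) is the standard resolution.
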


\begin{proof} 
This follows from the well-known correspondence between Morse functions on $M$ and cobordisms associated to regular level sets (see \cite{Mil}, \cite{Mil1965}).
\end{proof}

\begin{remark}
The group rank, which is defined below, gives a lower bound on the first handle rank, and hence on the first Morse rank (see Remark~\ref{rem:grank lower bounds hrank}).
\end{remark}


\subsection{Lower bounds from Novikov numbers}

For every first cohomology class $\xi \in H^1(M; \R)$ and every $k\in \Nzero,$ there is a \emph{Novikov Betti number} $b_k(\xi)$ and a \emph{Novikov torsion number} $q_k(\xi)$ (see \cite{Farber}, \S1.5 for three equivalent definitions). For $\xi=0,$ $b_k(\xi)$ is the usual $k^{th}$ Betti number of $M$ and $q_k(\xi)$ is the minimal number of generators of the torsion subgroup of $H_k(M; \Z).$ For general $\xi,$  the ring $\Z$ is replaced with a local system of modules over a fancier ring. The Novikov inequalities (see \cite{Farber}, \S2.3) state that
\begin{equation}
m_k(\omega) \ge b_k(\xi) + q_k(\xi) + q_{k-1}(\xi),
\end{equation}
if $[\omega] = \xi.$ Moreover, $b_k(\xi) =  b_k(r\xi)$ for every $0 \neq r \in \R$ and $q_k(\xi) =  q_k(r\xi)$ for every $r>0.$ It follows that for $[\xi] \in S(M),$ we have
\begin{equation}
\mrank_k[\xi] \ge b_k(\xi) + q_k(\xi) + q_{k-1}(\xi).
\end{equation}
Moreover, $b_0(\xi) =  0 =q_0(\xi),$ so the relationship is particularly nice when $k=1$:
\begin{equation}
\mrank_1[\xi] \ge b_1(\xi) + q_1(\xi).
\end{equation}
We call a function $f\co S(M) \to \Nzero$ \emph{polyhedral} if each superlevel set $f^{-1}[k,\infty)$ is a spherical polytope.
Farber (\cite{Farber}, \S1.6) gives a nice description of the geometry of the Novikov inequalities and, in particular, a polyhedrality result. Since the Morse rank has this polyhedral lower bound, this triggers the following question:
\begin{question}
Is the Morse rank a polyhedral function?
\end{question}

\begin{example}[(3--manifolds with arbitrarily large Morse rank)] Farber (\cite{Farber}, \S3.4.2) gives a family of closed 3--manifolds $X_n,$ which are obtained as the connected sum of $S^2\times S^1$ and the 0--surgery on the connected sum of $n$ trefoil knots, and satisfy $H^1(X_n; \Z) \cong \Z^2.$ Moreover, the generator $\xi$ supported by the  $S^2\times S^1$ summand has the property that every Morse closed 1--form representing it has at least $n$ zeros of index 1. This is established using inequalities arising from a ring, which is more elaborate than the Novikov ring (see \cite{Farber}, Theorem 3.6).
\end{example}


\section{Groups}
\label{sec:tropical rank}


Let $G$ be a finitely presented group, and $S(G)$ be the set of equivalence classes of non-trivial homomorphisms $\varphi \co G \to \R,$ where two such homomorphisms are equivalent if they are positive scalar multiples of each other. Then $S(G) \cong S^{n-1},$ where $G / G' \cong \Z^n \oplus$torsion.
Define the \emph{tropical rank} of the class $[\varphi] \in S(G)$ by
$$
\rank [\varphi] = \min_M \{ \mrank_1 [\xi] \},
$$
where the minimum is taken over all smooth closed manifolds $M$ with $\pi_1(M)=G,$ and where $\xi$ is the image of $\varphi$ under the canonical identification 
$$\Hom(\pi_1(M), \R) \cong H^1(M;\R).$$ 
The tropical rank is a well-defined function $S(G) \to \Nzero,$ since $r \xi$ is the image of $r \varphi$  for $r>0$ and since $\mrank_1$ is integer valued. The definition involves taking the minimum, and so Proposition~\ref{pro:morse rank for manifold} directly implies:
\begin{proposition}
The tropical rank is upper semi-continuous and has bounded image.
\end{proposition}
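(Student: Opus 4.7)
The plan is to use the upper semi-continuity of the first Morse rank on each individual manifold (Proposition~\ref{pro:morse rank for manifold}) and transport it along the identification $\Hom(G,\R)\cong H^1(M;\R)$ arising from the Hurewicz map for each closed smooth manifold $M$ with $\pi_1(M)\cong G$. A manifold $M$ of this kind is implicit in the definition of tropical rank; concretely one can produce one by thickening a finite presentation 2--complex in some $\R^N$ and doubling along the boundary. Each such identification $\Hom(G,\R)\cong H^1(M;\R)$ is a linear isomorphism of finite-dimensional real vector spaces, hence automatically a homeomorphism, and induces a homeomorphism $S(G)\cong S(M)$ respecting the topologies in which upper semi-continuity is being claimed.

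For boundedness I would fix any single such $M$. Since $\mrank_1\co S(M)\to\Nzero$ is upper semi-continuous by Proposition~\ref{pro:morse rank for manifold} and integer-valued on the compact sphere $S(M)$, its image is finite, say bounded by some $N_M$. The definition of $\rank$ as a minimum over manifolds then gives $\rank[\varphi]\le \mrank_1[\xi]\le N_M$ for every $[\varphi]\in S(G)$, where $[\xi]$ corresponds to $[\varphi]$ under $S(G)\cong S(M)$. Thus the image of $\rank$ lies in $\{0,1,\dots,N_M\}$.

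For upper semi-continuity, fix $[\varphi_0]\in S(G)$ and choose a smooth closed manifold $M_0$ with $\pi_1(M_0)\cong G$ realising the defining minimum, so that $\rank[\varphi_0]=\mrank_1[\xi_0]$ with $\xi_0\in H^1(M_0;\R)$ corresponding to $\varphi_0$; this minimiser exists because $\mrank_1$ takes values in $\Nzero$. Proposition~\ref{pro:morse rank for manifold} applied to $M_0$ provides an open neighbourhood $N\subset S(M_0)$ of $[\xi_0]$ on which $\mrank_1$ is bounded above by $\mrank_1[\xi_0]$. Pulling $N$ back through the homeomorphism $S(G)\cong S(M_0)$ gives an open neighbourhood $U$ of $[\varphi_0]$ in $S(G)$, and for each $[\varphi]\in U$
$$\rank[\varphi]\le \mrank_1[\xi]\le \mrank_1[\xi_0]=\rank[\varphi_0],$$
where the first inequality uses that $\rank[\varphi]$ is an infimum over all manifolds while $\mrank_1[\xi]$ is computed on the particular manifold $M_0$.

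The whole argument is really the standard fact that the pointwise infimum of a family of upper semi-continuous functions is upper semi-continuous, packaged around the observation that a single choice of $M_0$ suffices to witness openness near each point. The only step needing any care is verifying that the topology on $S(G)$ matches the topology on $S(M)$ via the Hurewicz isomorphism; this is automatic from finite-dimensionality, so I do not expect a genuine obstacle.
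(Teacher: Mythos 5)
Your proof is correct and follows essentially the same route as the paper: the paper simply observes that the tropical rank is defined as a minimum over manifolds and invokes Proposition~\ref{pro:morse rank for manifold}, which is exactly your ``infimum of upper semi-continuous functions is upper semi-continuous'' argument together with boundedness witnessed by a single manifold. The only difference is that you spell out the details (attainment of the minimum, the topological identification $S(G)\cong S(M)$) that the paper leaves implicit.
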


The word \emph{tropical} alludes to the fact that this rank gives information on the complement of the Bieri-Neumann-Strebel invariant \cite{BNS}---this complement was characterised by Brown \cite{Brown} using group theoretic valuations, and hence can be viewed as a tropical set.

The above proposition gives the first part of Theorem~\ref{thm:intro groups result}. The remaining parts concern rational classes and follow from the equality of Morse rank and handle rank (Lemma~\ref{lem:hrank=mrank}) together with the equality of handle rank and a \emph{group rank} defined using HNN extensions (Lemma~\ref{lem:hrank=grank}), fundamental facts from \cite{BNS} (summarised in Lemma~\ref{lem:g-rank and BNS}) and an application of Magnus rewriting (Lemma~\ref{lem:upper bound on group rank}).


\subsection{The Bieri-Neumann-Strebel invariant and HNN extensions}

We now define the Bieri-Neumann-Strebel invariant $\Sigma(G)$. Choose a finite generating set for $G$ and denote $\mathcal{G}$ the corresponding Cayley graph, with the convention that $G$ acts from the left on this graph. Given the non-trivial homomorphism $\varphi \co G \to \R,$ define a $G$--equivariant map $\tilde{\varphi}\co \mathcal{G} \to \R$ by sending vertices to their images under $\varphi$ and extending linearly over edges. Denote $\mathcal{G}_\varphi$ the maximal subgraph of $\mathcal{G}$ contained in $\tilde{\varphi}^{-1}(-\infty, 0].$ It turns out that the connectedness of $\mathcal{G}_\varphi$ is independent of the representative of $[\varphi]$ and the generating set for $G.$ Define $\Sigma(G) \subseteq S(G)$ to be the set of precisely those $[\varphi]$ for which $\mathcal{G}_\varphi$ is connected.\footnote{The definition of the BNS invariant involves various conventions and choices of sign, and one often encounters a definition using the preimage of $[0, \infty)$ instead of $(-\infty, 0].$ The above definition matches \cite{BNS, FT15}.} It is shown in \cite{BNS} that $\Sigma(G)$ is an open subset of $S(G).$ 

If the non-trivial homomorphism $\varphi \co G \to \R$ has discrete (and hence cyclic) image, then it is a positive scalar multiple of a unique epimorphism $G \twoheadrightarrow \Z,$ and it is termed a \emph{discrete} homomorphism. The point $[\varphi] \in S(G)$ is called \emph{rational.} The set 
$$
S\Q (G) = \{ [\varphi] \in S(G) \mid \varphi \text{ is discrete}\} 
$$
of rational points is dense in $S(G).$ 

Suppose $G$ is a finitely presented group with epimorphism $\varphi\co G \twoheadrightarrow \Z.$ 
An {\em associated HNN--extension of $(G,\varphi)$}
is $(B,A,t,\alpha),$ where:
\begin{equation}\label{eq:HNN extension}
G = \langle\; t, B \mid A  = t^{-1}\alpha (A)t\;\rangle,
\end{equation}
and $A \subseteq B \subseteq \ker \varphi,$ both $A$ and $B$ are finitely generated, $\alpha \co A \to B$ is a monomorphism, and $t \in G$ with $\varphi(t) =1.$ (A proof of this fact can be found in \cite{BS1978}; we give an independent proof in \S\ref{subsec:group handle-rank}.) In this case, $(G, \varphi)$ is said to \emph{split over $A.$}
If $A=B,$ then the HNN-extension is called \emph{ascending} in \cite{BNS} (see the discussion in \S4.1 of \cite{FT15}). The following two facts are established in \cite{BNS}:
\begin{enumerate}
\item $[\varphi] \in \Sigma(G)$ if and only if $\varphi$ corresponds to an ascending HNN extension;
\item $\ker \varphi$ is finitely generated if and only if $[\varphi] \in \Sigma(G)$ and $[-\varphi] \in \Sigma(G).$
\end{enumerate}

In \cite{FT15}, the complexity of $[\varphi]$ is defined as the minimal rank of the group $A,$ where the minimum is taken over all HNN--extensions of $G$ of the form (\ref{eq:HNN extension}), and this gives a measure analogous to the Thurston norm~\cite{Thu1986}. We will show that the tropical rank defines a complementary complexity, namely the \emph{minimal difference} between $A$ and $B.$ This will be established through two alternative viewpoints---using HNN extensions and handle decompositions.


\subsection{The group-rank}

Suppose $G$ is a finitely presented group with epimorphism $\varphi\co G \twoheadrightarrow \Z.$
For an associated HNN extension (\ref{eq:HNN extension}), define $\grank (B, A)$ to be the minimal number of elements one needs to add to $A$ in order to generate B, i.e.
$$\grank(B, A) = \min_{B = \langle b_1,\ldots b_n, A\rangle} n.$$
Then define
\begin{equation}\label{eq:group rank}
\grank(\varphi) = \min (\ \grank (B, A)\ ),
\end{equation}
where the minimum is taken over all HNN--extensions of $G$ of the form (\ref{eq:HNN extension}).

The above stated facts from \cite{BNS} concerning the Bieri-Neumann-Strebel invariant give the following characterisation of classes with trivial group rank:
\begin{lemma}\label{lem:g-rank and BNS}
We have:
\begin{enumerate}
\item $\grank (\varphi) = 0$ if and only if $[\varphi] \in \Sigma(G),$ and
\item $\grank (\varphi) = 0$ and $\grank (-\varphi) = 0$ if and only if $\ker \varphi$ is finitely generated.
\end{enumerate}
\end{lemma}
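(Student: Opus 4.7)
The plan is to observe that this lemma reduces to an essentially definitional unpacking, once the two facts from \cite{BNS} quoted just above the statement are accepted.

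First I would handle part (1). The key observation is that $\grank(B,A) = 0$ means that $B$ is generated by $A$ together with zero additional elements, i.e.\ $A = B.$ Consequently, for a given epimorphism $\varphi\co G \twoheadrightarrow \Z,$ the equality $\grank(\varphi) = 0$ holds if and only if there is an associated HNN extension of the form (\ref{eq:HNN extension}) with $A = B,$ which is precisely the condition that this HNN extension be ascending. Combining this with the first cited fact from \cite{BNS}---namely that $[\varphi] \in \Sigma(G)$ if and only if $\varphi$ corresponds to an ascending HNN extension---gives the biconditional in (1).

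Part (2) is then immediate from part (1) and the second cited fact from \cite{BNS}: if $\grank(\varphi) = 0$ and $\grank(-\varphi) = 0,$ then part (1) applied to both $\varphi$ and $-\varphi$ gives $[\varphi], [-\varphi] \in \Sigma(G),$ hence $\ker \varphi$ is finitely generated; conversely, if $\ker \varphi$ is finitely generated, then $[\pm\varphi] \in \Sigma(G),$ so $\grank(\pm\varphi) = 0$ by part (1). Since $\ker \varphi = \ker(-\varphi),$ the symmetry poses no issue.

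There is no real obstacle here—everything is a straightforward unpacking of definitions given the inputs from \cite{BNS}. The only mild subtlety worth flagging is to make sure that the notion of ``associated HNN extension'' used in the definition of $\grank$ agrees with the one used in the BNS characterisation, in particular that the existence of \emph{some} ascending associated HNN extension, rather than a specific canonical one, is what corresponds to membership in $\Sigma(G).$ This is implicit in the phrasing ``$\varphi$ corresponds to an ascending HNN extension'' taken from \cite{BNS}, so no further argument is needed in this paper.
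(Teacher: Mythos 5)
Your proposal is correct and matches the paper exactly: the paper gives no separate proof, simply asserting that the two quoted facts from \cite{BNS} yield the lemma, and your unpacking of $\grank(B,A)=0$ as $A=B$ (i.e.\ the ascending condition) is precisely the intended argument.
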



\subsection{The handle-rank}
\label{subsec:group handle-rank}

Define
\begin{equation}\label{def:hrank}
\hrank (\varphi) = \min_M \{ \hrank_1 (\xi) \},
\end{equation}
where the minimum is taken over all closed \smooth $n$--manifolds $M$ with $\pi_1(M) = G,$ and where $\xi$ is the image of $\varphi$ under the canonical identification $\Hom(\pi_1(M), \Z) \cong H^1(M;\Z).$

We wish to compare the handle rank with the group rank. Note that the groups $A$ and $B$ in presentation (\ref{eq:HNN extension}) are not necessarily finitely presentable (according to the Higman embedding theorem they are recursively presentable). We work around this issue using \emph{``fake HNN--extensions"} as follows.

Suppose $G$ is a finitely presented group and $\varphi \co G \twoheadrightarrow \Z.$ There is a finite presentation of $G$ of the form
\begin{equation}\label{eq:fake HNN}
\langle\; s, d_i, c_j \mid r_k(d_1, \ldots ,c_1, \ldots)=1,\; s c_j s^{-1} = w_j(d_1, \ldots ,c_1, \ldots) \;\rangle
\end{equation}
with $\varphi(s)=1$ and $\varphi(d_i) = \varphi(c_j) =0.$ The existence of this presentation can be seen either algebraically or topologically. 

Algebraically, one can apply Magnus re-writing to any presentation of $G$ to convert it to the desired form (see \cite[\S IV.5]{LS1977} and also the proof of Lemma~\ref{lem:upper bound on group rank}).

The topological argument starts by choosing a closed, connected \smooth manifold $M$ with $\pi_1(M) = G.$ Then $\varphi \co G \twoheadrightarrow \Z$ determines a \smooth map $f\co M \to S^1.$ Let $p\in S^1$ be a regular value with connected level set $H = f^{-1}(p),$ and $N = M \setminus \nu(H)$ be the complement of an open regular neighbourhood of $H$ in $M.$ Then $\partial N = H_+ \cup H_-$ is the disjoint union of two copies of $H$ and there are two (not necessarily injective) inclusion homomorphisms $\alpha_\pm \co \pi_1(H) \to \pi_1(N).$ Applying the Generalised Van Kampen Theorem \cite[Theorem 6.2.11]{Geo} to $M \setminus H$ and $\nu(H)$ gives a presentation of $\pi_1(M) = G$ of the form
$$\langle\; s, \pi_1(N) \mid s\alpha_+(h)s^{-1} = \alpha_-(h)\; \ \forall h \in \pi_1(H) \;\rangle,$$
where $\pi_1(N)$ and $\pi_1(H)$ are finitely presented groups since both $N$ and $H$ are compact.
Since $\pi_1(H)$ is finitely generated  we only need the relations $s\alpha_+(h)s^{-1} = \alpha_-(h)$ for a generating set. 
This gives a presentation in the form (\ref{eq:fake HNN}) as follows. Denote the generators and relators of $\pi_1(N)$ by $d_i$ and $r_k$ respectively and for each relation of the form $s\alpha_+(h)s^{-1} = \alpha_-(h)$ add one generator $c_j$ and the relations 
$c_j = \alpha_+(h)$ and $sc_js^{-1} = \alpha_-(h)$
with $\alpha_\pm(h)$ expressed as words in the generators of $\pi_1(N).$
This completes the topological argument that a presentation of the form (\ref{eq:fake HNN}) exists. 

From (\ref{eq:fake HNN}), one obtains an HNN extension as follows. Let $A = \langle c_j \rangle \le G$ and $B = \langle d_i, c_j \rangle \le G.$ Then $A$ and $B$ are finitely generated (but possibly not finitely presented) subgroups of $G$ with $A \le B.$ Moreover, $s A s^{-1} \le B$ by construction, so the conjugation map $\alpha: a \mapsto sas^{-1}$ takes $A$ to an isomorphic subgroup of $B.$ Whence $G = \langle s, B \mid A = s^{-1}\alpha(A)s\rangle$ and we also have 
$$
\grank(B,A) \le |\ \{ d_i \} \ |.
$$

\begin{lemma}\label{lem:hrank=grank}
$\hrank(\varphi) = \grank (\varphi)$
\end{lemma}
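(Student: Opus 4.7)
The plan is to prove the equality $\hrank(\varphi) = \grank(\varphi)$ by establishing the two inequalities separately, each via the Van Kampen--based correspondence (as in the fake HNN discussion above) between cutting a manifold with $\pi_1 = G$ along a dual hypersurface and decomposing $G$ as an HNN extension.

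For $\grank(\varphi) \le \hrank(\varphi),$ I would take a closed smooth $n$-manifold $M$ with $\pi_1(M) = G$ realizing $\hrank_1(\xi) = k := \hrank(\varphi),$ where $\xi$ is the class corresponding to $\varphi.$ By definition, there is a 2-sided hypersurface $V$ dual to $\xi$ such that the cobordism $N = M \setminus \nu(V)$ rel $V_-$ admits a handle decomposition with exactly $k$ one-handles. Since $\xi \ne 0$ one may take $V$ connected and non-separating; in sufficiently large dimension one may further arrange $V_-$ connected and $N$ free of $0$-handles. Applying the Van Kampen argument developed in the fake HNN paragraphs to the decomposition $M = \nu(V) \cup N$ presents $G$ as an HNN extension with base $B = \im(\pi_1(N) \to G)$ and edge $A = \im(\pi_1(V_-) \to G) \le B.$ Since $\pi_1(N)$ is generated by $\pi_1(V_-)$ together with one element per 1-handle (the higher-index handles only impose relations), the image $B$ is generated by $A$ together with $k$ additional elements, giving $\grank(B,A) \le k$ and hence $\grank(\varphi) \le \hrank(\varphi).$

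For $\hrank(\varphi) \le \grank(\varphi),$ I would start with an HNN extension $G = \langle t, B \mid A = t^{-1}\alpha(A)t\rangle$ realizing $\grank(B,A) = k := \grank(\varphi),$ so that $B$ is generated by $A$ together with $k$ additional elements $b_1,\ldots,b_k.$ Applying Magnus rewriting (cf.\ Lemma~\ref{lem:upper bound on group rank}) to a finite presentation of $G$ produces a finite presentation of the form (\ref{eq:fake HNN}) with exactly $k$ generators $d_i$ (corresponding to $b_1,\ldots,b_k$). This presentation furnishes a handle decomposition of a closed smooth $n$-manifold $M$ with $\pi_1(M) = G$ (for $n$ sufficiently large): one $0$-handle, one $1$-handle per generator $s,\ d_i,\ c_j,$ one $2$-handle per relator $r_k$ and each HNN relator $sc_js^{-1}w_j^{-1},$ and higher-index handles closing up $M$ without altering $\pi_1.$ Choose a hypersurface $V$ dual to $\xi$ that cuts the $s$ $1$-handle transversely. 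Cutting $M$ along $V,$ the $s$ $1$-handle and the HNN $2$-handles $sc_js^{-1}w_j^{-1}$ become the collar and the identification between $V_-$ and $V_+$ realizing $\alpha;$ the $c_j$ $1$-handles lie on $V_-;$ the $r_k$ $2$-handles lie in the interior of $N;$ and the only interior $1$-handles of $N$ rel $V_-$ are the $k$ handles corresponding to the $d_i.$ This gives $\hrank_1(\xi) \le k$ and hence $\hrank(\varphi) \le \grank(\varphi).$

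The principal obstacle is the second direction. One must (i) perform the Magnus rewriting so that the resulting fake HNN presentation contains exactly $k,$ rather than more, generators of type $d_i,$ and (ii) realize the algebraic handle decomposition as a smooth closed manifold together with a hypersurface whose complement has the handle structure described. The subtlety in (ii) is that the HNN $2$-cells $sc_js^{-1}w_j^{-1},$ when cut by $V,$ must be absorbed entirely into the identification $V_- \leftrightarrow V_+$ realizing $\alpha,$ rather than contributing additional interior $1$-handles to $N$ rel $V_-$; this requires carefully choosing the handle attachment so that the $c_j$ loops on $V_-$ are carried by the gluing to the words $w_j$ on $V_+.$
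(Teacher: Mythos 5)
Your first inequality, $\grank(\varphi)\le\hrank(\varphi)$, is essentially the paper's argument verbatim (minimising manifold, Van Kampen, $B=\im(\pi_1(N)\to G)$, $A=\im(\pi_1(V_-)\to G)$, one new generator per $1$--handle), and it is fine. The problem is the converse inequality, where your sketch defers exactly the step that carries all the content. Two concrete gaps. First, Magnus rewriting applied to an arbitrary finite presentation of $G$ produces one generator $d_i$ for each original generator other than $s$, i.e.\ about $n-1$ of them; it does not produce a presentation of the form (\ref{eq:fake HNN}) whose $d_i$'s are the $k$ chosen elements $b_1,\dots,b_k$ and whose $c_j$'s generate the prescribed subgroup $A$. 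To get a finite presentation of $G$ on the generators $\{t,a_i,b_j\}$ in which $t$ occurs only in the conjugation relations $ta_it^{-1}=w_i$ (the paper's form (\ref{eq:fake HNN 00})), one must start from the HNN data $(B,A,t,\alpha)$ and the chosen generating sets, not from an arbitrary presentation; this is a separate (Tietze-type) argument that your proposal does not supply.

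Second, and more seriously: having built $M$ by attaching handles to a presentation, the cocore of the $s$ $1$--handle does not extend to a hypersurface $V$ with the properties you claim without further work. The attaching circle of each $2$--handle $sc_js^{-1}w_j^{-1}$ runs over the $s$--handle twice with opposite signs, so $V$ must be completed by bands through those $2$--handles; after this the $c_j$ $1$--handles are attached to the $0$--handle of $M$, disjoint from $V$, and hence become interior $1$--handles of $N=M\setminus\nu(V)$ rel $V_-$ --- they do not automatically ``lie on $V_-$''. If they count as interior $1$--handles you get $\hrank_1(\xi)\le k+|\{c_j\}|$, not $\le k$. You flag this as subtlety (ii) but offer no mechanism to resolve it. The paper's proof is designed precisely to sidestep this: it builds a $2$--complex $Y_0$ with \emph{separate} $1$--cells for the $a_i$ and the $\alpha(a_i)$, glues on a product $X\times I$ over the subcomplex carrying the $a_i$, identifies $X\times\{1\}$ with the $\alpha(a_i)$ cells to get $Y_2$ with $\pi_1(Y_2)=G$ and a built-in product region, and then takes the boundary $Y_3$ of a regular neighbourhood in $\R^6$. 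The hypersurface $W$ is the piece of $Y_3$ lying over $X\times\{\tfrac12\}$, so the $a_i$--loops live on $W_\pm$ by construction. Finally, the count of exactly $k$ one-handles is not read off from the cell structure but proved by a cancellation argument: removing the $b_j$ $1$--handles leaves $Z$ with $\pi_1(W_-)\cong\pi_1(Z)$, and \cite[Lemma 6.15]{RS} then eliminates all remaining $1$--handles. Your proposal contains neither the product-region construction nor a substitute for this cancellation step, so as written it does not establish $\hrank(\varphi)\le\grank(\varphi)$.
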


\begin{proof}
Each relative handle decomposition gives rise to a fake HNN--decomposition of the form (\ref{eq:fake HNN}). For any $\varphi,$ there is a manifold $M$ with relative handle decomposition $(N,H_-, \mathcal{H})$ realising the minimum in (\ref{def:hrank}). Using the above notation with $B = \im(\pi_1(N)\to \pi_1(M))$ and $A = \im(\pi_1(H_-)\to \pi_1(M))$ gives:
$$
\grank(\varphi) \le \grank(B,A) \le |\ \{ d_i \} \ | = \hrank_1(N,H_-) = \hrank(\varphi).
$$

To prove $\hrank(\varphi) \le \grank (\varphi),$ build a 2--complex with fundamental group $G$ as follows. Let $G = \langle t, B \mid t A t^{-1} = \alpha (A)\rangle$ be an HNN--extension realising minimal group rank for $\varphi.$ Choose finite generating sets $\{ a_i \}$ for $A$ and $\{ a_i, b_j \}$ for $B$ such that $|\{ b_j\}| = \grank(\varphi).$ Since $G$ is a finitely presentable group and generated by $\{t, a_i, b_j \},$ there is a finite presentation of $G$ with these generators and of the form 
\begin{equation}\label{eq:fake HNN 00}
G = \langle t, a_i, b_j \mid r_k(a_1,\ldots,b_1,\ldots),\ t a_i t^{-1} = w_i(a_1,\ldots,b_1,\ldots) \rangle.
\end{equation}
Given the finite presentation, we let $C$ be the free group in $\{a_i\}$ and $D$ be the group 
$$\langle a_i, b_j \mid r_k(a_1,\ldots,b_1,\ldots) \rangle.$$

Build a 2--complex, $Y_0,$ with one 0--cell and 1--cells for the \emph{elements} $b_i,$ $a_i$ and $\alpha(a_i),$ and 2--cells
 for the relators in the presentation of $D$ and the extra relations $\alpha(a_i)w_i^{-1},$ where $w_i$
  is a word in the $a_i$ and $b_i$ equal to the element $\alpha(a_i).$ This 2--complex has fundamental group $D.$ 

Then take the sub-complex $X\subset Y_0$ formed by the cells for $C$ and cross it with an interval, giving a 2--complex, $X\times I.$ Let $Y_1 = Y_0 \cup_{X=(X \times \{0\})} (X\times I).$ Then $Y_1$ still has fundamental group $D.$ 

Let $Y_2$ be the 3--complex obtained from $Y_1$ as follows. For each $a_i,$ identify the 1--cell representing $a_i$ in $X\times \{1\} \subset Y_1$ with the 1--cell representing $\alpha(a_i).$ It follows that $Y_2$ has fundamental group $G.$

Embed $Y_2$ in $\R^5$ and embed $\R^5$ in $\R^6$ in the standard way, taking $\R^5 \ni x \mapsto (x,0) \in \R^6.$ Let $Y_3$ be the boundary of a regular neighbourhood of the embedding. This is a 5--manifold and it has a product region, identified with $W \times (0,1)$ for suitable $W,$ corresponding to the subcomplex $X \times (0,1).$ Identify $W = W \times \{\frac{1}{2}\} \subset Y_3.$ 

We claim that there is a handle decomposition of $\overline{Y_3 \setminus W}$ relative to $W_- \subset \partial (\overline{Y_3 \setminus W}),$ $W_- \cong W,$ with exactly one 1--handle for each $b_i.$ To show this, remove the interior of each 1--handle corresponding to each $b_i,$ and denote the result $Z.$ Then
$$
\pi_1(W_-) \to \pi_1(Z) \to \pi_1(Z, W_-) \to \pi_0 =0,
$$
whence $\pi_1(W_-) \cong \pi_1(Z)$ and it follows from \cite[Lemma 6.15]{RS} (and the remark after that lemma), that no additional 1--handles are needed. Moreover, $W_-$ is connected as a consequence of the chosen embedding of $Y_2$ into $\R^6,$ and it is non-separating due to the presentation of the fundamental group. Hence we have shown that $\hrank(\varphi) \le \grank (\varphi).$ 
\end{proof}

\begin{remark}\label{rem:grank lower bounds hrank}
If $\Gamma = \pi_1(M)$ and $\varphi$ corresponds to $\xi,$ then 
$\grank(\varphi) \le \hrank_1 (\xi),$ giving another lower bound for Morse rank.
\end{remark}

\begin{example}
Example~\ref{exa:torus} implies that the tropical rank for the group $\Z^2 = \pi_1(T)$ vanishes identically. For rational classes, this is easy to verify using HNN extensions, since any epimorphism $\Z^2 \to \Z$ is equivalent to the epimorphism $\varphi \co \langle a, b \mid a^{-1}ba = b\rangle \to \Z$ with $\varphi(a)=1$ and $\varphi(b)=0,$ whence $\rank[\varphi]=0.$
\end{example}


\subsection{Consequences for the tropical rank}

Recall that if $\varphi \co G \to \R$ is discrete, then $\varphi(G)$ is infinite cyclic, and hence there is a unique $r \in \R_+$ such that $r\varphi \co G \twoheadrightarrow \Z.$ It follows from Lemmata \ref{lem:hrank=mrank} and \ref{lem:hrank=grank} that:
$$
\rank [\varphi] = \hrank(r\varphi)= \grank(r\varphi).
$$

\begin{lemma}\label{lem:upper bound on group rank}
If $G$ has a presentation with $n$ generators, then $\rank[\varphi] \le n$ for each $[\varphi]\in S(G),$ and
$\rank[\varphi] \le n-1$ for each rational class $[\varphi]\in S\Q(G).$
\end{lemma}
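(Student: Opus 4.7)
The plan is to reduce the bound for rational classes to estimating a group rank via an HNN extension, using the identifications $\rank = \hrank = \grank$ provided by Lemmata~\ref{lem:hrank=mrank} and \ref{lem:hrank=grank}, and to construct the HNN extension from a presentation of $G$ with $n$ generators by Magnus rewriting. The general bound will then come from a direct construction of a Morse 1--form on a manifold built from the presentation 2--complex.

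Fix a presentation $G = \langle x_1,\ldots,x_n \mid r_1,\ldots,r_m\rangle.$ Given $[\varphi]\in S\Q(G),$ let $r\varphi\co G\twoheadrightarrow\Z$ denote the associated discrete epimorphism. Since the composition $F_n \twoheadrightarrow G \twoheadrightarrow \Z$ is surjective, a Nielsen transformation of the free generators lets me assume $r\varphi(x_1)=1$ and $r\varphi(x_i)=0$ for $i\ge 2.$ Setting $t = x_1$ and $x_i^{(k)} = t^k x_i t^{-k}$ for $i\ge 2,$ $k\in\Z,$ Magnus rewriting \cite[\S IV.5]{LS1977} expresses each relator $r_j$ as a word $\rho_j$ in the $x_i^{(k)}.$ Only finitely many shifts occur among the $\rho_j,$ so I can choose $N\ge 1$ with every $\rho_j$ supported on $\{x_i^{(k)}: 2\le i\le n,\ 0\le k\le N\}.$ Put
\[
A = \langle x_i^{(k)} : 2\le i\le n,\ 0\le k\le N-1\rangle, \qquad B = \langle x_i^{(k)} : 2\le i\le n,\ 0\le k\le N\rangle.
\]
Then $A\le B\le\ker r\varphi$ are finitely generated, conjugation by $t$ restricts to a monomorphism $\alpha\co A\to B$ with $\alpha(x_i^{(k)}) = x_i^{(k+1)},$ and the original presentation of $G$ transforms into the HNN decomposition $G \cong \langle t, B \mid A = t^{-1}\alpha(A)t\rangle$ of the form (\ref{eq:HNN extension}). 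Since $B = \langle A,\,x_2^{(N)},\ldots,x_n^{(N)}\rangle,$ this gives $\grank(B,A)\le n-1,$ and hence $\rank[\varphi] = \grank(r\varphi) \le n-1.$

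For an arbitrary class $[\varphi]\in S(G),$ I would embed the presentation 2--complex $K$ of $G$ in $\R^d$ for some $d\ge 5$ and take $M$ to be the boundary of a regular neighbourhood, so that $\pi_1(M)=G$ and $M$ carries a handle decomposition with one 0--handle, $n$ 1--handles labelled by the $x_i,$ 2--handles for the $r_j,$ and handles of higher index. The cellular cocycle on $K$ defined by $\omega_K(x_i)=\varphi(x_i)$ is closed because $\omega_K$ integrates to $\varphi(r_j)=0$ around each 2--cell, and it extends to a smooth closed 1--form $\omega$ on $M$ equal to $\varphi(x_i)\,dt$ on the $i^{\mathrm{th}}$ 1--handle with zeros confined to the 0--handle; merging $n$ 1--handles at a single 0--handle contributes at most $n$ index--1 saddles after a generic perturbation within the cohomology class, so $\mrank_1[\xi]\le n$ for the class $\xi\in H^1(M;\R)$ corresponding to $\varphi.$ The main obstacle is the Magnus-rewriting step: one must verify in detail that the resulting decomposition really is an HNN extension of the form (\ref{eq:HNN extension}), i.e.\ that $A$ and $B$ are finitely generated, that $\alpha$ is a well-defined monomorphism into $B,$ and that the original relators of $G$ all lie in $B$ after rewriting.
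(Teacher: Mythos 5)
Your treatment of the rational case is essentially the paper's argument: Magnus rewriting of an $n$--generator presentation into an HNN decomposition in which $B$ is obtained from $A$ by adjoining at most $n-1$ elements, followed by $\rank[\varphi]=\grank(r\varphi)\le\grank(B,A)$. Your normalisation via a Nielsen transformation (so that $r\varphi(x_1)=1$ and $r\varphi(x_i)=0$ for $i\ge 2$) is a clean way to avoid the paper's separate handling of the case where no generator maps to $\pm1$; the only small point to add is that the shifts $k$ occurring in the rewritten relators $\rho_j$ need not be non-negative, so you should replace each $r_j$ by a conjugate $t^{m_j}r_jt^{-m_j}$ (which does not change the normal closure) before choosing $N$. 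This part of your proposal is correct.

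The gap is in the general case, the bound $\rank[\varphi]\le n$ for \emph{all} of $S(G)$, including irrational classes. The sentence ``merging $n$ 1--handles at a single 0--handle contributes at most $n$ index--1 saddles after a generic perturbation within the cohomology class'' is an assertion of exactly what needs to be proved: you have not defined the 1--form on the 2--handles and higher--index handles of $M$, you have not explained why the zeros forced inside the 0--handle region have the claimed indices and multiplicities, and a generic perturbation \emph{within a fixed class} does not by itself bound the number of index--1 zeros. The paper's route closes this gap with the local stability result (Lemma~\ref{lem:local stability}), which you never invoke here: take the Morse function $f$ dual to the handle decomposition with $n$ one--handles, so that $\omega=\dd f$ is a Morse 1--form representing the class $0$ with exactly $n$ zeros of index 1; Lemma~\ref{lem:local stability} then produces, for every class $\alpha$ in a neighbourhood of $0$ in $H^1(M;\R)$, a Morse representative with the \emph{same} zeros, hence with exactly $n$ index--1 zeros; since $\mrank_1(\xi)=\mrank_1(r\xi)$ for $r>0$, every nonzero class is a positive multiple of one in that neighbourhood, giving $\mrank_1[\xi]\le n$ on all of $S(M)$ and hence $\rank[\varphi]\le n$. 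I recommend replacing your direct construction of a form in the class $\varphi$ with this perturb--then--rescale argument; your embedding of the presentation complex and the resulting handle decomposition are the right starting point, but the passage from the handle decomposition to a bound valid for an arbitrary (irrational) class needs the stability lemma.
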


\begin{proof}
We first address the general case. As in the proof of Lemma~\ref{lem:hrank=grank}, we can construct a closed manifold $M$ with a handle decomposition having exactly $n$ 1--handles in its handle decomposition. We can define a Morse 1--form on $M$ dual to this handle decomposition (see \cite{Mil}, \cite{Mil1965}). This has exactly $n$ critical points of index 1. The local stability result Lemma~\ref{lem:local stability} implies that this is a universal bound on the number of critical points of index 1 of every Morse 1--form on $M.$

Next suppose that $[\varphi]\in S\Q(G).$
Given the presentation $G = \langle\; g_i \mid r_j \;\rangle$ with $n$ generators we apply Magnus rewriting as follows. If $\varphi(g_{i_0})=\pm 1$ for the generator $g_{i_0}$ of $G,$ then choose $s=g_{i_0}^{\pm 1}$ and $d_i = g_i s^{\mp \varphi(g_i)}$ for all $i \neq i_0.$ We have $\varphi(s)=1$ and $\varphi(d_i)=0$ for each $d_i.$ Now rewrite the relators using this new generating set $\{s, d_i\}.$ If $s$ appears in the relator $r_k$, then (up to cyclic permutation) there is a substring in $r_k$ of the form $s^{-1} w s,$ where $w$ is a word in the $d_i.$ We introduce a new generator $c_j,$ the relation $sc_js^{-1} = w$ and replace the substring $s^{-1} w s$ by $c_j$ in $r_k.$ Note that $\varphi(c_j)=0.$ Iterating this procedure results in the desired presentation, and the number of generators $d_i$ is bounded above by $n-1.$

Hence assume $\varphi(g_{i})\neq \pm1$ for all generators of $G.$ Since $\varphi$ is an epimorphism, there are two generators $g_{i_0}$ and $g_{i_1}$ such that their respective images are coprime. So there are $p, q \in \Z$ such that $\varphi$ maps $s = g_{i_0}^p g_{i_1}^q$ to one. Introduce this new generator, and replace each $g_i$ by $b_i = g_i s^{- \varphi(g_i)}$. This gives a new generating set $\{s, b_i\}$ with $n+1$ elements. Rewrite all relators in these generators and add the relator $s^{-1} (  b_{i_0}s^{\varphi({g_i}_0)}   )^p (  b_{i_1}s^{\varphi({g_i}_1)}   )^q.$ The latter lets us choose either $c_{j_0} = b_{i_0}$ or $c_{j_1} = b_{i_1},$ and we put $d_{i} = b_{i}$ for all remaining indices. With this we procede as above, again obtaining a generating set $\{s, c_j, d_i\}$ with at most $n-1$ generators $d_i.$
\end{proof}

\begin{question}
Is the tropical rank a polyhedral function?
\end{question}

\begin{remark}
The definition of tropical rank took into account only the 1--handles (\emph{generators}) since in general, the groups arising in an HNN extension are finitely generated but not necessarily finitely presented. For groups with the property that every finitely generated subgroup is finitely presented, one can also take into account the 2--handles (\emph{relators}), giving a finer $\Nzero^2$--valued complexity, for which one can define analogous $\Nzero^2$--valued group and handle ranks, which agree with it on rational classes.
\end{remark}

\subsection*{Acknowledgements}
{The first author is partially supported by NSF grants DMS--0706887, 1065939, 1207068 and  1045292.}
{The second author is partially supported by Australian Research Council grant DP140100158.}


%
%

\bibliographystyle{amsplain}
\bibliography{refs}

\bigskip


\address{Department of Mathematics, University of California Santa Barbara, CA 93106, USA}
\email{cooper@math.ucsb.edu}

\address{School of Mathematics and Statistics, The University of Sydney, NSW 2006, Australia} 
\email{stephan.tillmann@sydney.edu.au} 
\Addresses

\end{document}